\def\vir{\mathrm{vir}}
\begin{document}
\title[DT invariants of dihedral quotients]
{Donaldson-Thomas invariants for $3$-Calabi-Yau varieties of dihedral quotient type}

\author{Sergey Mozgovoy}
\address{School of Mathematics, Trinity College Dublin, College Green, Dublin 2, Ireland}

\author{Markus Reineke}
\address{Fakultät für Mathematik, Ruhr-Universität Bochum, Universitätsstraße 150, 44780 Bochum, Germany}

\begin{abstract} 
We compute motivic Donaldson-Thomas invariants for crepant resolutions of quotients of affine three-space by even dihedral groups in terms of 
an affine type $D$ root system, using double dimensional reduction and the representation theory of affine type $D$ quivers.
\end{abstract}
\maketitle

\section{Introduction}

The Donaldson-Thomas invariants of Calabi-Yau threefolds, introduced in \cite{Thomas}, admit a motivic refinement defined in \cite{BBS,KS},  from which several other numerical invariants can be derived. 

A complete description of all motivic DT invariants is achieved only for very special classes of Calabi-Yau threefolds,
in particular,
for affine three-space \cite{BBS}, for the (resolved) conifold \cite{MMNS},
for crepant resolutions related to Kleinian singularities \cite{Mozgovoy_McKay},
and for small crepant resolutions
of affine toric Calabi-Yau $3$-folds \cite{MorrisonNagao} (except the case of $\bC^3/(\bZ_2\xx\bZ_2)$, considered in this paper).
See \cite{MP} for an overview of these results.

In the present work, we give a complete description of motivic DT invariants for crepant resolutions of quotients of affine three-space by a natural action of even dihedral groups. Our main result, Theorem \ref{thm:gen fun}, shows that the motivic DT invariants are related to 
an affine type $D$ root system,  and that only three different non-zero values of the DT invariants appear.

As in the case of the works cited above, this result is achieved by factoring the motivic generating series of a particular $3$-Calabi-Yau algebra, given as the Jacobian algebra of a quiver with potential naturally associated to the dihedral group action \cite{ginzburg}. Since the relevant potential admits a cut \cite{MMNS}, dimensional reduction can be performed, leading to a twisted version of a preprojective algebra of an extended Dynkin quiver \cite{mozgovoy_translation}. It admits a further dimensional reduction, which reduces the problem to a generating series of representations of an extended Dynkin quiver, twisted by a certain interaction form. Using the known representation theory of extended Dynkin quivers, this quadratic form can be computed on all representations, and an explicit factorization of the motivic generating function can be achieved.

We start in Section \ref{recoll} by recalling the construction of quivers with potentials from finite group actions on affine three-space, the definition of motivic generating functions and motivic Donaldson-Thomas invariants, and the method of dimensional reduction for potentials with cuts. In Section \ref{mckay} we compute the quiver with potential corresponding to the dihedral group action we are interested in, relate the algebra resulting from dimensional reduction to twisted preprojective algebras, and state an explicit form of the motivic generating function involving the crucial interaction form. Its explicit computation is the topic of Section \ref{sec:sigma}. After these preparations, we formulate and prove our main result in Section \ref{sec:genfun}, and illustrate it in the example of the action of a Klein four-group.\\[2ex]
{\it Acknowledgments:} 
The authors are supported by the project "SFB-TRR 191 Symplectic structures in geometry, algebra and dynamics" of the Deutsche Forschungsgemeinschaft.

\section{Recollections on 3-CY algebras and their DT invariants}\label{recoll}

In this section, we collect the main definitions and results on $3$-Calabi-Yau algebras and their Donaldson-Thomas invariants. 

\sec[3-Calabi-Yau algebras] We follow \cite[\S4.4]{ginzburg}. Let $G$ be a finite subgroup of ${\rm SL}_3(\mathbb{C})$, and let $L_1,\ldots,L_s$ be a system of representatives of the isomorphim classes of irreducible representations of $G$. Let $V=\mathbb{C}^3$ be the defining representation of $G$, which therefore carries a $G$-invariant determinant map
$\det:V^{\otimes 3}\to\bigwedge^3V\iso\mathbb{C}$. 
We define the McKay quiver,
associated with representation $V$,
as the quiver with vertices $1,\ldots,s$, and the arrows $i\rightarrow j$ parametrized by a basis of ${\rm Hom}_G(L_i,L_j\otimes V)$. For any oriented triangle $\Delta:\, i\rightarrow j\rightarrow k\rightarrow i$ in $Q$, we define $\lambda_\Delta$  as the scalar factor of the identity map in the composition
\begin{equation}
L_i\rightarrow L_j\otimes V\rightarrow L_k\otimes V^{\otimes 2}\rightarrow L_i\otimes V^{\otimes 3}
\xto\det L_i.
\end{equation}
We define a potential for $Q$ as the linear combination
\begin{equation}
W=\sum_\Delta\lambda_\Delta\Delta,
\end{equation}
the summation ranging over all oriented triangles in $Q$ up to cyclic shift. 
Let $J=\mathbb{C}Q/\langle\partial W\rangle$ be the Jacobian algebra for the quiver with potential $(Q,W)$. 
By \cite[Theorem 4.4.6]{ginzburg}, $J$ is Morita equivalent to the skew group algebra $\mathbb{C}[V]\# G$.
By \cite{bridgeland_mukai},
its category of modules is derived equivalent to the category of coherent sheaves on any crepant resolution of the quotient $V\GIT G$.

\sec[Donaldson-Thomas invariants] 

We follow \cite{MMNS}. Let $(Q,W)$ be a pair consisting of a finite quiver $Q$, with set of vertices $Q_0$ and set of arrows $Q_1$, and a potential $W\in\mathbb{C}Q/[\mathbb{C}Q,\mathbb{C}Q]$, that is, a finite linear combination of cyclic paths considered up to cyclic shift. 
We denote by 
\begin{equation}
J=\mathbb{C}Q/\langle\partial W\rangle
=\bC Q/\angs{\dd W/\dd a}{a\in Q_1}
\end{equation}
the Jacobian algebra of $(Q,W)$. We denote by $\chi_Q$ the homological Euler form of $Q$.

For a dimension vector $d\in\mathbb{N}Q_0$, let $R(Q,d)=\bop_{a:i\to j}\Hom(\bC^{d_i},\bC^{d_j})$ be the variety of representations of $Q$ of dimension vector $d$, which carries a natural action of the base change group $G_d=\prod_{i\in Q_0}\GL_{d_i}(\bC)$.
Let $R(J,d)\sbs R(Q,d)$ be the closed subvariety of $J$-representations.
It is known that $R(J,d)=\crit(f_d)$,
the set of critial points of the function $f_d:R(Q,d)\to \bC$
given by the trace of the potential $W$ on representations of $Q$.
Consider the quotient stack $\cM(J,d)=[\crit(f_d)/G_d]$,
which is the moduli stack of representations of $J$ of dimension vector $d$.

In the following, we work in the localized Grothendieck ring
\begin{equation}
R=K_0(\Var_\bC)[q^{\pm\oh},\, (1-q^i)^{-1}\rcol i\geq 1]
\end{equation}
of complex varieties, where $q=[\bA^1]$ denotes the Lefschetz motive. In fact, all calculations in this work will be performed in the subring
$R'=\mathbb{Q}[q^{\pm\oh},\, (1-q^i)^{-1}\rcol i\geq 1]\subset\mathbb{Q}(q^\oh)$. 
The moduli stack $\mathcal{M}(J,d)$ admits a virtual motive $[\mathcal{M}(J,d)]_\vir\in R$, defined using motivic nearby cycles. We will not need the precise definition, since in the case of potentials with cuts (see \S\ref{potentialwithcut}), this virtual motive can be expressed in more elementary terms.
In the situation we are interested in, it is in fact an element of $R'$.

Let $R[Q_0]=\bop_{d\in\bN Q_0}Rt^d$ be the algebra of polynomials over $R$ and let $R\pser{Q_0}$ be its completion.
We define the (total) motivic generating series
of $(Q,W)$ as
\begin{equation}\label{total1}
\cA(t)=\sum_{d\in\bN Q_0}[\cM(J,d)]_\vir
\cdot t^d\in R\pser{Q_0}.
\end{equation}

We define the plethystic exponential $\Exp$ of a series
in $R'\pser{Q_0}$ without constant term as
\begin{equation}
\Exp\rbr{\sum_{d\in\mathbb{N}Q_0}
\sum_{k\ge 0}c_{d,k}\cdot  q^{k/2}t^d}
=\prod_{d\in\bN Q_0}\prod_{k\ge0}(1-q^{k/2}t^d)^{-c_{d,k}}.
\end{equation}

Assuming that $\cA(t)\in R'\pser{Q_0}$, we define the motivic Donaldson--Thomas invariants $\Omega_d(q)\in \mathbb{Q}(q^{\oh})$ of $(Q,W)$ by the factorization
\begin{equation}
\mathcal{A}(t)={\rm Exp}\left(\frac{\sum_d\Omega_d(q)t^d}{q-1}\right).
\end{equation}

\subsection{\tpdf{$q$}q-Pochhammer symbols}
Let us define the $q$-Pochhammer symbols
\begin{equation}
(t;q)_n=\prod_{i=0}^{n-1}(1-q^it),\qquad
(t;q)_\infty=\prod_{i=0}^{\infty}(1-q^it)
=\Exp\rbr{\frac t{q-1}}
.
\end{equation} 
They satisfy
\begin{equation}
\sum_{n\ge0}\frac{(a;q)_n}{(q;q)_n}t^n
=\frac{(at;q)_\infty}{(t;q)_\infty}
=\Exp\rbr{\frac{1-a}{1-q}t}.
\end{equation}
which leads to the versions of the $q$-binomial theorem
\begin{equation}
\sum_{n\ge0}\frac{t^n}{(q;q)_n}
=\frac{1}{(t;q)_\infty}
,\qquad
\sum_{n\ge0}\frac{(-1)^nq^{\binom n2}t^n}{(q;q)_n}=(t;q)_\infty.
\end{equation}
In what follows we will use $(q)_n=(q;q)_n=\prod_{i=1}^n(1-q^i)$.
Note that 
\begin{equation}
[\GL_n]=q^{n^2}\cdot (q\inv)_n,\qquad n\ge1.
\end{equation}




\subsection{DT invariants for potentials with cuts}
\label{potentialwithcut}
Let $(Q,W)$ be a quiver with a potential and $I\sbs Q_1$ be a cut,
meaning a subset such that every non-zero term of $W$ contains exactly one element from $I$.
Let $Q'$ be the quiver $(Q_0,Q_1\ms I)$ and
\begin{equation}
J_I=\bC Q'/\ang{\dd W/\dd a\rcol a\in I}.
\end{equation}
The required motivic generating series \eqref{total1} is equal to
\begin{equation}\label{total2}
\cA(t)=\sum_{d\in\bN Q_0}(-q^\oh)^{s(d,d)}\frac{[R(J_I,d)]}{[G_d]}t^d,\qquad
s(d,e)=\hi_Q(d,e)+2\sum_{(a:i\to j)\in I}d_ie_j
\end{equation}
and is independent of a cut. 

Let us assume that there is another cut $I'\sbs Q_1$ disjoint from $I$.
We define the quiver $Q''=(Q_0,Q_1\ms(I\cup I'))$ and consider the forgetful map
\begin{equation}\label{proj}
\pi:R(J_I,d)\to R(Q'',d)
\end{equation}
having linear fibers.
Given a $Q''$-representation $M$, let $\vi(M)$ denote the dimension of the fiber $\pi\inv(M)$.
In our applications $\vi(M)$ will be quadratic, meaning that
there exist values $\vi(M,N)$ such that $\vi(M)=\vi(M,M)$ and $\vi(\bop_i M_i,\bop_j N_j)=\sum_{i,j}\vi(M_i,N_j)$.

Let $S$ be the set parameterizing isomorphism classes of all indecomposable $Q''$-represen\-ta\-tions.
Then every representation can be written in the form $M=\bop_{X\in S}X^{\oplus m_X}$
for some map $m:S\to\bN$ with finite support.
Therefore we have
\begin{multline}
\label{part function}
\cA(t)=\sum_{d\in\bN Q_0}\sum_{\udim M=d}
\frac{(-q^\oh)^{s(d,d)+2\vi(M)}}{[\Aut M]}t^{d}\\
=\sum_{m:S\to\bN}
\frac{(-q^\oh)^{-\sum_{X,Y}m_X m_Y\si(X,Y)}}
{\prod_X(q\inv)_{m_X}}
t^{\sum_X m_X\udim X}
\end{multline}
where we used the fact that (see \cite{Mozgovoy_McKay})
\begin{equation}
[\Aut M]=[\End M]\cdot\prod_{X\in S}(q\inv)_{m_X},\qquad
M=\bop_{X\in S}X^{\oplus m_X},
\end{equation}
and where
\begin{equation}\label{sigma}
\si(M,N)=2h^0(M,N)-2\vi(M,N)-s(d,e),
\qquad d=\udim M,\,e=\udim N,
\end{equation}
and $h^i(M,N)=\dim\Ext^i(M,N)$ for $M,N\in\Rep(Q'')$.
The formula \eqref{part function},
representing the motivic generating series of $(Q,W)$ in terms of representations of $Q''$, 
is called the (double) dimensional reduction formula.

\begin{remark}
In the above formulas we can substitute $\si$ by its symmetrization.
We call the map $\si:S\xx S\to\bZ$ an interaction form and we say that $M,N$  do not interact if $\si(M,N)=0$ (for the symmetrized form $\si$).
Assume that we can decompose $S=S_1\sqcup\dots\sqcup S_r$ so that elements of $S_i$ and $S_j$  do not interact for $i\ne j$.
We can define the generating series $\cA_k(t)$ by restricting the above sum to maps $m:S_k\to\bN$.
Then we obtain $\cA(t)=\prod_k \cA_k(t)$.
In what follows we will decompose $S$ into simple classes that don't interact with each other and then we will compute generating series for separate classes.
\end{remark}

\subsection{Generating series of \tpdf{$C_2$}{C2}}
The following  result describes the motivic generating series of the cyclic quiver $C_2$ (equipped with the zero potential) having two vertices $1,2$ and arrows $1\to 2$ and $2\to 1$.
It will be used later in our study of motivic generating series associated to dihedral quotients. 

\begin{theorem}
\label{C2 part function}
The motivic generating series of the cyclic quiver $C_2$ is equal to
$$F(t)=\sum_{d\in\bN^2}
\frac{(-q^\oh)^{-(d_1-d_2)^2}}{\prod_i(q\inv)_{d_i}}t^d=\Exp\rbr{\frac{qt_1t_2-q^{\oh}(t_1+t_2)}{q-1}}.$$
\end{theorem}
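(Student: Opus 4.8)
The plan is to compute the interaction form $\si$ directly for the quiver $C_2$ (no arrows need to be cut here since $W=0$, so $Q''=Q=C_2$ and $\vi\equiv 0$), and then recognise the resulting sum as a plethystic exponential. First I would classify the indecomposable representations of $C_2$. Since $C_2$ is a cyclic quiver of length $2$, its indecomposables are easy to list: for each pair $(i,\ell)$ with $i\in\{1,2\}$ a starting vertex and $\ell\ge 1$ a length, there is a unique indecomposable nilpotent-or-not representation; but in fact a cleaner route is to bypass the combinatorics of $S$ and work with the first line of the claimed identity. Namely, it suffices to evaluate $s(d,d)$ for $d=(d_1,d_2)$: here $Q_1$ has arrows $a\colon 1\to2$ and $b\colon 2\to1$, so $\hi_Q(d,d)=d_1^2+d_2^2-d_1d_2-d_2d_1=(d_1-d_2)^2$, and since no arrows are cut, $s(d,d)=\hi_Q(d,d)=(d_1-d_2)^2$. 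This is exactly the exponent appearing in $F(t)$, so the left-hand side is literally the specialisation of \eqref{part function} with $\vi=0$, confirming the first equality in the statement.

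The substance is therefore the second equality: showing
$$\sum_{d\in\bN^2}\frac{(-q^\oh)^{-(d_1-d_2)^2}}{(q\inv)_{d_1}(q\inv)_{d_2}}t_1^{d_1}t_2^{d_2}
=\Exp\rbr{\frac{qt_1t_2-q^\oh(t_1+t_2)}{q-1}}.$$
Here I would use the $q$-series identities already recorded in the excerpt. Write $(d_1-d_2)^2=d_1^2+d_2^2-2d_1d_2$ and reorganise the double sum by the ``diagonal'' variable. Concretely, I would substitute $u_i=-q^\oh t_i$ so that $(-q^\oh)^{-(d_1-d_2)^2}t_1^{d_1}t_2^{d_2}$ becomes, up to the cross term, a product of two single-variable pieces of the shape $(-q^\oh)^{-d_i^2}t_i^{d_i}=(-1)^{d_i}q^{-d_i^2/2}\cdot(\text{stuff})$; the key observation is that $q^{-d^2/2}/(q\inv)_d$, after clearing the $q^{-d^2}$ in $(q\inv)_d=q^{-d}\cdots$, relates to the Gaussian-binomial-type coefficient $(-1)^d q^{\binom d2}/(q)_d$ appearing in the second form of the $q$-binomial theorem $\sum_n (-1)^n q^{\binom n2}t^n/(q)_n=(t;q)_\infty$. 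The cross term $q^{2d_1d_2}$ (from $-2d_1d_2$ with the sign inverted by the $(-q^\oh)^{-1}$) can be absorbed by a Cauchy-type product: expand $(at;q)_\infty/(t;q)_\infty=\sum_n (a;q)_n t^n/(q)_n$ with a suitable specialisation of $a$. I expect the cleanest packaging is to fix one index and sum over the other using $\sum_{n\ge0}\frac{(a;q)_n}{(q;q)_n}x^n=\Exp\!\rbr{\frac{1-a}{1-q}x}$, producing a single $\Exp$, then sum the remaining index the same way, and finally combine the two $\Exp$ arguments additively.

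The main obstacle, and the place where care is needed, is bookkeeping the half-integer powers of $q$ and the signs: the exponent $-(d_1-d_2)^2$ is not separable, so a naive factorisation into $F(t_1)F(t_2)$ fails, and one must genuinely perform one Cauchy product. I anticipate the identity $\Exp(f)\Exp(g)=\Exp(f+g)$ together with a single application of $\sum_n (a;q)_n x^n/(q)_n = (ax;q)_\infty/(x;q)_\infty$ will close it, with the specialisation $a=q$ producing the telescoping that kills all but the terms $qt_1t_2$ and $-q^\oh(t_1+t_2)$ in the exponent; verifying that no further terms survive is the one genuinely computational check. An alternative, if the direct manipulation gets unwieldy, is to verify the identity by checking that both sides satisfy the same $q$-difference equation in $t_1$ (with $t_2$ as parameter) with matching initial condition at $t_1=0$ (where the left side reduces to $\sum_{d_2}(-q^\oh)^{-d_2^2}t_2^{d_2}/(q\inv)_{d_2}=(q^\oh t_2;q)_\infty^{-1}\cdot(\ldots)$ via the second $q$-binomial theorem, matching $\Exp(-q^\oh t_2/(q-1))$); this reduces the whole claim to a one-variable $q$-binomial identity already in the excerpt.
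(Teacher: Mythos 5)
Your proposal is correct in outline but takes a genuinely different route from the paper for the main step. The paper proves the second equality by performing \emph{another} dimensional reduction: it forgets the arrow $2\to1$, lands on the quiver $A_2$, decomposes each $A_2$-representation into the three indecomposables $S_1,S_2,P_1$ with multiplicities $(m_1,m_2,m_3)$, and observes that after rewriting the exponent via $h^0(m,m)$ the sum splits as a product of three independent one-variable sums in $t_1$, $t_2$, $t_1t_2$, each evaluated by the one-variable identities in \S2.3. You instead attack the two-variable sum directly: expand $-(d_1-d_2)^2=-d_1^2+2d_1d_2-d_2^2$, sum over $d_1$ first using $\sum_m(-q^{\oh})^{-m^2}x^m/(q\inv)_m=1/(q^{\oh}x;q)_\infty$ (which produces the factor $(q^{\oh}t_1;q)_{d_2}/(q^{\oh}t_1;q)_\infty$ from the cross term $q^{d_1d_2}$), and then close the $d_2$-sum with the $q$-Gauss identity $\sum_n(a;q)_ny^n/(q)_n=(ay;q)_\infty/(y;q)_\infty$. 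This does work, and all the identities you need are already recorded in the paper; the one detail you guessed wrong is the specialisation: it is not $a=q$ but $a=q^{\oh}t_1$ (with $y=q^{\oh}t_2$), which yields $\Exp\bigl(\tfrac{qt_1t_2-q^{\oh}t_2}{q-1}\bigr)$ and, combined with the prefactor $1/(q^{\oh}t_1;q)_\infty=\Exp\bigl(\tfrac{-q^{\oh}t_1}{q-1}\bigr)$, the claimed product. Since you explicitly flagged this as the computational check to be carried out, I regard the argument as complete once that substitution is made. The trade-off: the paper's route is conceptual and generalises (it is the same double-reduction mechanism used for the main theorem), while yours is shorter and purely combinatorial, at the cost of one genuine Cauchy-product manipulation that the paper's factorisation into non-interacting indecomposables avoids.
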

\begin{proof}
Let $\hi_{C_2}$ be the Euler form of $C_2$.
Then the motivic generating series of $C_2$ is
\eqref{total2}
$$
F(t)=
\sum_{d\in\bN^2}(-q^\oh)^{\hi_{C_2}(d,d)}
\frac{[R(C_2,d)]}{[G_d]}t^d
=\sum_{d\in\bN^2}\frac{(-q^\oh)^{-\hi_{C_2}(d,d)}}{\prod_i(q\inv)_{d_i}}t^d$$
which is the left hand side of the required equality.

Let us consider the forgetful map $\pi:R(C_2,d)\to R(A_2,d)$, where $A_2$ is the quiver $1\to 2$.
Given a representation $M\in R(A_2,d)$, the corresponding fiber has dimension $\vi(M)=d_1d_2$.
Let $X_1=S_1,X_2=S_2,X_3=P_1$ be the indecomposable representations of $A_2$.
Then every $A_2$-representation $M$ can be written in the form $M=\bop_i X_i^{\bop m_i}$ for some $m\in\bN^3$.
Note that $d=\udim M=(m_1+m_3,m_2+m_3)$ and
$$h^0(M,M)=\sum_{i,j}m_im_j h^0(X_i,X_j)=\sum_i m_i^2+m_2m_3+m_3m_1=:h^0(m,m).$$
As in \eqref{part function}, we obtain
\begin{multline*}
F(t)=\sum_d\sum_{\udim M=d}
\frac{(-q^\oh)^{\hi_{C_2}(d,d)+2d_1d_2}}{[\Aut M]}t^d\\
=\sum_{\ov{m\in\bN^3}{d=(m_1+m_3,m_2+m_3)}}
\frac{(-q^\oh)^{d_1^2+d_2^2-2h^0(m,m)}}
{\prod_i(q\inv)_{m_i}}t_1^{m_1+m_3}t_2^{m_2+m_3}
=\sum_{m\in\bN^3}
\frac{(-q^\oh)^{-m_1^2-m_2^2}}{\prod_i(q\inv)_{m_i}}
t_1^{m_1}t_2^{m_2}(t_1t_2)^{m_3}.
\end{multline*}
We note that (the formulas correspond to the case of a one loop quiver and a zero loop quiver respectively; they also follow from the $q$-binomial theorem)
$$\sum_{m\ge0}\frac{t^m}{(q\inv)_m}=\Exp\rbr{\frac {qt}{q-1}},\qquad
\sum_{m\ge0}\frac{(-q^\oh)^{-m^2}}{(q\inv)_m}t^m
=\Exp\rbr{\frac {-q^\oh t}{q-1}}.
$$
Therefore
$$
F(t)=
\Exp\rbr{\frac {-q^\oh t_1}{q-1}}\cdot
\Exp\rbr{\frac {-q^\oh t_2}{q-1}}\cdot
\Exp\rbr{\frac {q t_1t_2}{q-1}}
=\Exp\rbr{\frac{qt_1t_2-q^{\oh}(t_1+t_2)}{q-1}}.$$
\end{proof}

\section{McKay quivers for dihedral groups}\label{mckay}

\subsection{McKay quiver with potential for even dihedral groups}

We first collect some elementary facts on the representation theory of even dihedral groups. 
So we consider the group $D_{2\ell}=\bZ_2\ltimes\bZ_{2\ell}$ with 
$\ell\geq 1$.
It admits the standard presentation $$D_{2\ell}=\angs{d,s}{s^2=d^{2\ell}=1,\,sds=d\inv}$$
and we can describe complex representations of $D_{2\ell}$ by specifying pairs of matrices $(A,B)$ representing $d$ and $s$, respectively.
We define one-dimensional representations
$$\rho_0:\, (1,1),\quad \rho_1:\, (1,-1),\quad \rho_2:\, (-1,1),\quad \rho_3:\, (-1,-1)$$
and two-dimensional representations
$$\tau_k:\, \rbr{\pmat{z^k&0\\0&z^{-k}},\ \pmat{0&1\\1&0}},\qquad k\in\bZ,$$
where $z=e^{\pi i/\ell}$.
Note that $\ta_k\iso\tau_{2\ell-k}$.
The following lemma summarizes the well-known representation theory of $D_{2\ell}$:

\begin{lemma}
The irreducible representations of $D_{2\ell}$ are $\rho_k$ for $k=0,1,2,3$ and $\tau_k$ for $k=1,\ldots,\ell-1$.
We have
$$\tau_0\simeq\rho_0\oplus\rho_1,\qquad
\tau_\ell\simeq\rho_2\oplus\rho_3,$$
$$\rho_1\otimes\rho_1\simeq\rho_0\simeq\rho_2\otimes\rho_2,
\qquad\rho_1\otimes\rho_2\simeq\rho_3,$$
$$\tau_1\otimes\tau_k\simeq\tau_{k-1}\oplus\tau_{k+1},
\qquad k\in\bZ,$$
$$\rho_0\otimes\tau_k\simeq\tau_k\simeq\rho_1\otimes\tau_k,
\qquad k\in\bZ,$$
$$\rho_2\otimes\tau_k\simeq\tau_{\ell-k}\simeq\rho_3\otimes\tau_k,
\qquad k\in\bZ.$$
\end{lemma}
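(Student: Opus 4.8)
The plan is the classical one for dihedral groups: the representations are already written down in the statement, so it remains to compute their characters, deduce irreducibility and pairwise non-isomorphism, close the list by a dimension count, and then obtain every tensor decomposition by comparing characters (using that a complex representation is determined up to isomorphism by its character). Throughout I use that $|D_{2\ell}|=4\ell$ and that every element of $D_{2\ell}$ is $d^i$ or $d^i s$ with $0\le i<2\ell$.

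First I would compute characters. For the one-dimensional $\rho_j$ the character is read off from the defining scalars. For $\tau_k$, from $\tau_k(d^i)=\smat{z^{ik}&0\\0&z^{-ik}}$ and $\tau_k(d^is)=\smat{0&z^{ik}\\z^{-ik}&0}$ one gets
$$\chi_{\tau_k}(d^i)=z^{ik}+z^{-ik},\qquad \chi_{\tau_k}(d^is)=0.$$
In particular $\chi_{\tau_k}$ depends only on $k\bmod 2\ell$ and satisfies $\chi_{\tau_{-k}}=\chi_{\tau_k}$; this records $\tau_k\iso\tau_{2\ell-k}$ and makes $\tau_k$ meaningful for all $k\in\bZ$, as needed in the tensor formulas. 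For $1\le k\le\ell-1$ the eigenvalues $z^k,z^{-k}$ of $\tau_k(d)$ are distinct, so $\bC e_1,\bC e_2$ are the only $\tau_k(d)$-invariant lines and these are interchanged by $\tau_k(s)=\smat{0&1\\1&0}$; hence $\tau_k$ is irreducible. The values $\chi_{\tau_k}(d)=2\cos(k\pi/\ell)$, $k=1,\dots,\ell-1$, are pairwise distinct, so these $\tau_k$ are pairwise non-isomorphic, and they differ from the one-dimensional $\rho_j$ by dimension. Since $4\cdot 1^2+(\ell-1)\cdot 2^2=4\ell=|D_{2\ell}|$, the list is complete, which gives the first assertion.

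Next I would dispatch the two boundary cases and the scalar identities. For $k=0$ we have $\tau_0(d)=I$ and $\tau_0(s)=\smat{0&1\\1&0}$, so in the $s$-eigenbasis $(1,1),(1,-1)$ the pair $(d,s)$ acts by $(1,1)$ and $(1,-1)$, i.e. $\tau_0\iso\rho_0\oplus\rho_1$. For $k=\ell$ we have $z^\ell=-1$, so $\tau_\ell(d)=-I$ and the same eigenbasis gives actions $(-1,1)$ and $(-1,-1)$, i.e. $\tau_\ell\iso\rho_2\oplus\rho_3$. The identities $\rho_1\ts\rho_1\iso\rho_0\iso\rho_2\ts\rho_2$ and $\rho_1\ts\rho_2\iso\rho_3$ are immediate multiplications of the defining scalars.

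Finally, the remaining tensor decompositions follow by comparing characters. Every representation occurring in these identities has character vanishing on each $d^is$ — for the tensor products because the $\tau$-factor there contributes an off-diagonal matrix at $d^is$, and for $\tau_0,\tau_\ell$ because $\chi_{\rho_0}+\chi_{\rho_1}$ and $\chi_{\rho_2}+\chi_{\rho_3}$ do — so it suffices to compare values on $\langle d\rangle$. On $d^i$: $\chi_{\rho_j\ts\tau_k}=\chi_{\rho_j}\chi_{\tau_k}$ equals $\chi_{\tau_k}(d^i)$ for $j=0,1$, and for $j=2,3$ equals $(-1)^i\chi_{\tau_k}(d^i)=z^{i\ell}(z^{ik}+z^{-ik})=z^{i(\ell+k)}+z^{i(\ell-k)}=\chi_{\tau_{\ell-k}}(d^i)$ (using $z^{2\ell}=1$ to identify $z^{i(\ell+k)}=z^{-i(\ell-k)}$); and $\chi_{\tau_1}\chi_{\tau_k}$ on $d^i$ equals $(z^i+z^{-i})(z^{ik}+z^{-ik})=\chi_{\tau_{k+1}}(d^i)+\chi_{\tau_{k-1}}(d^i)$. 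Hence $\rho_0\ts\tau_k\iso\rho_1\ts\tau_k\iso\tau_k$, $\rho_2\ts\tau_k\iso\rho_3\ts\tau_k\iso\tau_{\ell-k}$, and $\tau_1\ts\tau_k\iso\tau_{k-1}\oplus\tau_{k+1}$. I do not anticipate a genuine obstacle; the only point that needs care is bookkeeping at the edge of the index range: in $\tau_1\ts\tau_k$ one of $\tau_{k\pm1}$ may be $\tau_0$ or $\tau_\ell$, which must be read via the decompositions above (together with $\tau_{-m}\iso\tau_m\iso\tau_{2\ell-m}$) for the stated isomorphism to hold, while the underlying character identities are valid verbatim for every $k\in\bZ$.
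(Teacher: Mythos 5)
Your proof is correct and complete. The paper offers no proof of this lemma (it is stated as summarizing ``the well-known representation theory of $D_{2\ell}$''), and your character-theoretic argument --- irreducibility and non-isomorphism of the $\tau_k$ via eigenvalues and the values $2\cos(k\pi/\ell)$, completeness via $4\cdot 1^2+(\ell-1)\cdot 2^2=4\ell$, and all tensor decompositions by comparing characters on $\langle d\rangle$ --- is exactly the standard argument one would supply, including the correct handling of the boundary cases $\tau_0$ and $\tau_\ell$.
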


Consider the standard embedding
$D_{2\ell}\sbs\SO(3)\sbs\SL_3(\bC)$ and the induced representation~ $V$.
In this embedding $d$ acts as rotation in the $\ang{e_1,e_2}$-plane, and $s$ acts as inversion in the $\ang{e_2,e_3}$-plane:
$$d\mto\pmat{A&0\\0&1},\qquad s\mto\pmat{B&0\\0&-1},
\qquad 
A=\pmat{a&-b\\b&a},\qquad
B=\pmat{1&0\\0&-1},
$$
where $z=a+bi=e^{\pi i/{\ell}}$.
Note that in the third coordinate we get the representation $\rho_1$.
Taking the matrix $M=\smat{1&1\\-i&i}$ (its columns are the eigenvectors of $A$), we can see that
$$M\inv A M=\pmat{z&0\\0&z\inv},\qquad 
M\inv BM=\pmat{0&1\\1&0}.$$
This implies that the representation $(A,B)$ is isomorphic to $\ta_1$ and $V\iso \rho_1\oplus\tau_1$.

\begin{remark}
For $\ell=1$, we have $D_{2\ell}=\bZ_2\xx\bZ_2$ and the standard embedding $\bZ_2\xx\bZ_2\sbs\SO(3)$ is given by
$$d=(1,0)\mto\diag(-1,-1,1),\qquad s=(0,1)\mto\diag(1,-1,-1).$$
\end{remark}

Based on these facts, we can now describe the McKay quiver for $D_{2\ell}$ and its potential:

\begin{lemma}\label{lm:McKay}
The McKay quiver has the following form, where blue arrows correspond to $\rho_1$ and black arrows correspond to $\ta_1$.

For $\ell\ge 2$:
\tikzcdset{every arrow/.append style=<->}
\begin{ctikzcd}
\rho_0\ar[dr]\ar[dd,blue]&&&&&\rho_2\ar[dd,blue]\\
&\ta_1\rar\ar[->,loop above,blue]&
\ta_2\rar\ar[->,loop above,blue]&
\dots\rar&
\ta_{\ell-1}\ar[ru]\ar[rd]\ar[->,loop above,blue]\\
\rho_1\ar[ur]&&&&&\rho_3
\end{ctikzcd}

For $\ell=1$:

\begin{ctikzcd}
\rho_0\rar[<->]\dar[<->,blue]\drar[<->] &\rho_2\dar[<->,blue]\\
\rho_1\rar[<->]\urar[<->]&\rho_3
\end{ctikzcd}

\tikzcdset{every arrow/.append style=->}

The potential is a linear combination of cubic terms involving one blue arrow and two black arrows.
\end{lemma}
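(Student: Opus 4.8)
The plan is to compute everything directly from the definitions in Section~\ref{recoll}, feeding in the tensor-product table of the preceding lemma and the decomposition $V\iso\rho_1\oplus\tau_1$. Throughout I call an arrow \emph{blue} if it comes from the $\rho_1$-summand of $V$ and \emph{black} if it comes from the $\tau_1$-summand, in agreement with the colouring asserted in the lemma.

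\emph{The underlying quiver.} By definition the vertices are the irreducibles $\rho_0,\rho_1,\rho_2,\rho_3,\tau_1,\dots,\tau_{\ell-1}$, and the arrows $i\to j$ form a basis of $\Hom_G(L_i,L_j\ts V)$; splitting $V\iso\rho_1\oplus\tau_1$ gives the decomposition $\Hom_G(L_i,L_j\ts V)\iso\Hom_G(L_i,L_j\ts\rho_1)\oplus\Hom_G(L_i,L_j\ts\tau_1)$ into blue and black parts. Since $\rho_1\ts\rho_1\iso\rho_0$, the blue part is nonzero, and then one-dimensional, exactly when $L_i\iso L_j\ts\rho_1$, so $\rho_0\ts\rho_1\iso\rho_1$, $\rho_2\ts\rho_1\iso\rho_3$, $\tau_k\ts\rho_1\iso\tau_k$ give one blue arrow in each direction between $\rho_0$ and $\rho_1$, one between $\rho_2$ and $\rho_3$, and one blue loop at each $\tau_k$. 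For the black part, $\dim\Hom_G(L_i,L_j\ts\tau_1)$ equals the multiplicity of $L_j$ in $L_i\ts\tau_1$ (using $\tau_1\dual\iso\tau_1$), and the identities $\rho_0\ts\tau_1\iso\rho_1\ts\tau_1\iso\tau_1$, $\rho_2\ts\tau_1\iso\rho_3\ts\tau_1\iso\tau_{\ell-1}$, $\tau_k\ts\tau_1\iso\tau_{k-1}\oplus\tau_{k+1}$, together with $\tau_0\iso\rho_0\oplus\rho_1$ and $\tau_\ell\iso\rho_2\oplus\rho_3$, produce precisely the black chain running from $\rho_0,\rho_1$ through $\tau_1,\dots,\tau_{\ell-1}$ to $\rho_2,\rho_3$. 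Finally $V\dual\iso V$ because the character of $V$ is real, so the quiver is symmetric and each arrow appears with its reverse; this reproduces the two pictures, with the black chain collapsing to the single vertex $\tau_1$ when $\ell=2$, and for $\ell=1$ the ``black part'' being $\rho_2\oplus\rho_3$, so $V\iso\rho_1\oplus\rho_2\oplus\rho_3$ and $Q$ is the bidirected complete graph on $\rho_0,\rho_1,\rho_2,\rho_3$.

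\emph{The potential.} By construction $W=\sum_\Delta\lambda_\Delta\Delta$ is a sum over oriented triangles, hence automatically a linear combination of cubic terms, and $\lambda_\Delta$ is the scalar of the composite $L_i\to L_j\ts V\to L_k\ts V^{\ts2}\to L_i\ts V^{\ts3}\xto{\det}L_i$. Each of the three arrows of $\Delta$ carries a colour, i.e.\ singles out one of the subspaces $\rho_1$ or $\tau_1$ of $V$, so the composite lands inside $L_i\ts(U_1\ts U_2\ts U_3)\sbs L_i\ts V^{\ts3}$ with $U_m\in\{\rho_1,\tau_1\}$ before the alternating projection $\det\colon V^{\ts3}\to\bigwedge^3V$ is applied. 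The restriction of $\det$ to $U_1\ts U_2\ts U_3$ is nonzero only if one can choose linearly independent $u_m\in U_m$ inside the $3$-dimensional space $V$; as $\dim\rho_1=1$ and $\dim\tau_1=2$, this fails for the colour multisets $\{\rho_1,\rho_1,\rho_1\}$, $\{\tau_1,\rho_1,\rho_1\}$, $\{\tau_1,\tau_1,\tau_1\}$ (in the first two, two of the three vectors must be proportional; in the last, three vectors lie in a plane) and holds only for $\{\tau_1,\tau_1,\rho_1\}$. Therefore $\lambda_\Delta\ne0$ forces $\Delta$ to use exactly one blue and two black arrows; since this count uses only $\dim\rho_1=1$ and $\dim\tau_1=2$, it is uniform in $\ell\ge1$.

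The only genuinely laborious step is the combinatorial bookkeeping in the description of the quiver — following both ends of the black chain through the reductions $\tau_0\iso\rho_0\oplus\rho_1$, $\tau_\ell\iso\rho_2\oplus\rho_3$, and treating the degenerate cases $\ell=1,2$ by hand. The statement about the potential then drops out at once from the multilinear-algebra observation, with no further case analysis; should one want in addition that $W\ne0$, it would be enough to exhibit a single triangle with $\lambda_\Delta\ne0$, but the lemma as stated does not require this.
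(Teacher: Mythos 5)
Your proof is correct. The quiver part is essentially the paper's own computation: both proceed by decomposing $L_i\ts V$ (equivalently $\Hom_G(L_i,L_j\ts V)$ split along $V\iso\rho_1\oplus\tau_1$) using the tensor-product table, and both handle the boundary identifications $\tau_0\iso\rho_0\oplus\rho_1$, $\tau_\ell\iso\rho_2\oplus\rho_3$ in the same way. For the potential, however, you take a genuinely different route. The paper decomposes $V^{\ts3}$ into the four isotypic pieces $\rho_1^{\ts3}$, $(\rho_1^{\ts2}\ts\tau_1)^{\oplus3}$, $(\rho_1\ts\tau_1^{\ts2})^{\oplus3}$, $\tau_1^{\ts3}$, identifies each as a $G$-representation, and observes that the trivial representation $\rho_0\iso\bigwedge^3V$ occurs only in $\rho_1\ts\tau_1^{\ts2}$, so that $\det$ factors through the projection onto that summand by Schur's lemma. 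You instead argue directly that the alternating map kills $U_1\ts U_2\ts U_3$ whenever the colour multiset is not $\{\rho_1,\tau_1,\tau_1\}$, purely because one cannot choose three linearly independent vectors with the prescribed memberships; this is more elementary, avoids the character computations, and is manifestly uniform in $\ell$ (the paper's identifications such as $\tau_1^{\ts3}\iso\tau_1^{\oplus3}\oplus\tau_3$ need minor reinterpretation for small $\ell$, which your dimension count sidesteps). The trade-off is that the paper's method also exhibits where the copy of $\rho_0$ sits, which makes the nonvanishing of the surviving coefficients $\lambda_\Delta$ transparent; as you note, the lemma as stated does not require that, so your argument suffices.
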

\begin{proof} Using the previous lemma, we find the decompositions
$$\rho_0\otimes V=\rho_0\otimes(\rho_1\oplus\tau_1)\simeq\rho_1\oplus\tau_1,$$
$$\rho_1\otimes V=\rho_1\otimes(\rho_1\oplus\tau_1)\simeq\rho_0\oplus\tau_1,$$
$$\rho_2\otimes V=\rho_2\otimes(\rho_1\oplus\tau_1)\simeq\rho_3\oplus\tau_{\ell-1},$$
$$\rho_3\otimes V=\rho_3\otimes(\rho_1\oplus\tau_1)\simeq\rho_2\oplus\tau_{\ell-1},$$
$$\tau_k\otimes V=\tau_k\otimes(\rho_1\oplus\tau_1)\simeq\tau_k\oplus(\tau_{k-1}\oplus\tau_{k+1})$$
for $k=2,\ldots,\ell-2$, and
$$\tau_1\otimes V=\tau_1\otimes(\rho_1\otimes\tau_1)\simeq\tau_1\oplus(\rho_0\oplus\rho_1\oplus\tau_2),$$
$$\tau_{\ell-1}\otimes V=\tau_{\ell-1}\otimes(\rho_1\oplus\tau_1)\simeq\tau_{\ell-1}\oplus(\tau_{\ell-2}\oplus\rho_2\oplus\rho_3).$$
This yields the above McKay quivers.
To describe the potential, we decompose $V^{\otimes 3}$ into irreducible representations and single out the contributions of the trivial representation $\rho_0$. Namely, we find
$$V^{\otimes 3}\simeq\rho_1^{\otimes 3}\oplus(\rho_1^{\otimes 2}\otimes\tau_1)^{\oplus 3}\oplus(\rho_1\otimes\tau_1^{\otimes 2})^{\oplus 3}\oplus\tau_1^{\otimes 3},$$
and
$$\rho_1^{\otimes 3}\simeq\rho_1,\qquad
\rho_1^{\ts2}\otimes\tau_1\simeq\tau_1,$$
$$\rho_1\otimes\tau_1^{\otimes2}
\iso\rho_1\ts(\rho_0\oplus\rho_1\oplus\ta_2)
\simeq\rho_1\oplus\rho_0\oplus\tau_2,$$
$$\tau_1^{\otimes 3}
\iso\tau_1\ts(\rho_0\oplus\rho_1\oplus\ta_2)
\simeq\tau_1^{\oplus 3}\oplus\tau_3.$$
We analyse the determinant map $\det:V^{\otimes 3}\to\bigwedge^3V$ with respect to this direct sum decomposition. Since, obviously, $\bigwedge^3V\iso\rho_0$, and $\rho_1\otimes\tau_1^{\otimes 2}$ is the only summand containing a copy of $\rho_0$, the determinant map
factors through the projection $V^{\otimes 3}\rightarrow(\rho_1\otimes\tau_1^{\otimes 2})^{\oplus 3}$. Note that this holds true independent of $\ell$. By definition of the potential, this implies that the potential involves only triangles of arrows involving one blue arrow (corresponding to~ $\rho_1$) and two black arrows (corresponding to $\ta_1$). 
\end{proof}

\begin{remark} For odd dihedral groups $D_{2\ell+1}$ similar computations can be performed, leading to the McKay quiver
\tikzcdset{every arrow/.append style=<->}
\begin{ctikzcd}
\rho_0\ar[dr]\ar[dd,blue]\\
&\ta_1\rar\ar[->,loop above,blue]&
\ta_2\rar\ar[->,loop above,blue]&
\dots\rar&
\ta_{\ell}\ar[->,loop above,blue]
\ar[loop right,->]\\
\rho_1\ar[ur]
\end{ctikzcd}
\tikzcdset{every arrow/.append style=->}
and potential as before. It admits a cut consisting of blue arrows, but the resulting quiver with relations involves a square of the loop at $\ta_\ell$. Therefore, it does not admit a second cut, and our method for factoring the motivic generating function does not apply.
\end{remark}

\sec[Generating series via double dimensional reduction] Based on the dimensional reduction method described in \S\ref{potentialwithcut} we can now describe the motivic generating series in terms of the representation theory of quivers of extended Dynkin types.
Let $(Q,W)$ be the McKay quiver with potential described in Lemma \ref{lm:McKay}. 
There is a cut $I\sbs Q_1$ consisting of all black arrows going from right to left and a cut $I'\sbs Q_1$ consisting of all blue arrows.
Let $Q'=Q\ms I$ and 
$J_I=\bC Q'/(\dd W/\dd a\rcol a\in I)$.
Then the motivic generating series \eqref{total2} has the form
\begin{gather}
\cA(t)=\sum_d (-q^\oh)^{s(d,d)}\frac{[R(J_I,d)]}{[G_d]}t^d,\\
s(d,d)=\hi_Q(d,d)+2\sum_{(a:i\to j)\in I}d_id_j
=\hi_{C_2\sqcup C_2}(d,d),
\end{gather}
where $C_2\sqcup C_2$ is the restriction of $Q$ to the vertices $\rho_i$ (and blue arrows between them).
It is indeed just a union of two cyclic quivers.

Consider the quiver $Q''=Q\ms(I\cup I')$, which is an extended Dynkin quiver of type $\hat D_{\ell+2}$ for $\ell\geq 2$ (respectively of type $\hat D_3=\hat A_3$ for $\ell=1$).
The projection $\pi:R(J_I,d)\to R(Q'',d)$ has the fiber over 
$M\in R(Q'',d)$ encoded by the blue arrows.
More precisely, this fiber is equal to $\Hom(M,\Si M)$, where 
\begin{equation}\label{Sigma}
\Si:Q''\to Q''
\end{equation}
is the involution on $Q''$ induced by the blue arrows.
Applying \eqref{part function},
we can rewrite the motivic generating series as a sum over representations of $Q''$:
\begin{lemma}\label{afterdoublecut}
We have\begin{multline*}
\cA(t)
=\sum_{d\in\bN Q_0}\sum_{\udim M=d}
\frac{(-q^\oh)^{s(d,d)+2h^0(M,\Si M)}}{[\Aut M]}t^{d}\\
=\sum_{m:S\to\bN}
\frac{(-q^\oh)^{-\sum_{X,Y}m_Xm_Y\si(X,Y)}}
{\prod_X(q\inv)_{m_X}}t^{\sum_X m_X\udim X},
\end{multline*}
where 
$$\si(M,N)=2h^0(M,N)-2h^0(M,\Si N)-s(M,N),$$
and $S$ denotes the set of isomorphism classes of indecomposable $Q''$-representations.
\end{lemma}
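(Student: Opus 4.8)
The plan is to read off both displayed formulas from the double dimensional reduction formula \eqref{part function}, after identifying the fibres of the forgetful map $\pi\col R(J_I,d)\to R(Q'',d)$. Recall from Lemma \ref{lm:McKay} and \S\ref{potentialwithcut} that $(Q,W)$ carries the two disjoint cuts $I$ (the black arrows pointing right-to-left) and $I'$ (the blue arrows): deleting $I$ gives $J_I$ on $Q'=Q\ms I$, so that $\cA(t)$ equals \eqref{total2} with $s(d,d)=\hi_{C_2\sqcup C_2}(d,d)$, and deleting $I\cup I'$ leaves the extended Dynkin quiver $Q''$, of type $\hat D_{\ell+2}$ when $\ell\ge2$ and of type $\hat D_3=\hat A_3$ when $\ell=1$.

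First I would describe $\pi\inv(M)$ for $M\in R(Q'',d)$. Its points are the assignments of linear maps to the blue arrows of $Q$ obeying the relations $\dd W/\dd a=0$ for $a\in I$. By Lemma \ref{lm:McKay} every term of $W$ is a triangle carrying exactly one blue arrow and two black arrows, precisely one of which lies in $I$ and one in $Q''$; differentiating such a term by its arrow of $I$ leaves a path of one blue arrow followed by one arrow of $Q''$, so these relations are linear in the blue data and $\pi\inv(M)$ is a linear subspace. Now there is exactly one blue arrow out of each vertex, and its target is the image of that vertex under the involution $\Si$ of $Q''$ induced by the blue arrows (which swaps $\rho_0$ with $\rho_1$, swaps $\rho_2$ with $\rho_3$, fixes every $\ta_k$, and acts correspondingly on the black arrows); hence an assignment of maps to the blue arrows is the same as a family $(\psi_i\col M_i\to(\Si M)_i)_{i\in Q_0}$. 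A triangle-by-triangle inspection then shows that the relations $\dd W/\dd a=0$, $a\in I$, are exactly the commutation relations making $(\psi_i)$ a morphism $M\to\Si M$ of $Q''$-representations, up to the nonzero scalars $\la_\Delta$ of $W$, which can be absorbed into a rescaling of the $\psi_i$ and so do not affect dimensions. Therefore $\vi(M)=\dim\pi\inv(M)=\dim\Hom_{Q''}(M,\Si M)=h^0(M,\Si M)$.

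Next I would note that $\vi$ is quadratic in the sense of \S\ref{potentialwithcut}: setting $\vi(M,N):=h^0(M,\Si N)$, additivity of $\Hom$ and of $\Si$ in each argument gives $\vi(M)=\vi(M,M)$ and $\vi(\bop_i M_i,\bop_j N_j)=\sum_{i,j}\vi(M_i,N_j)$. Feeding $\vi(M)=h^0(M,\Si M)$ into \eqref{part function} produces the first displayed equality of the lemma, while inserting $\vi(M,N)=h^0(M,\Si N)$ into \eqref{sigma} gives $\si(M,N)=2h^0(M,N)-2h^0(M,\Si N)-s(M,N)$; with this interaction form the second form of \eqref{part function} is exactly the second displayed equality, the outer sum running over maps $m\col S\to\bN$ with $M=\bop_{X\in S}X^{\oplus m_X}$.

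The hard part is the middle step: checking that the cut relations $\dd W/\dd a=0$, $a\in I$, genuinely recover the morphism condition for $M\to\Si M$. This demands the exact list of triangles in $W$, especially near the forks $\ta_1$ and $\ta_{\ell-1}$ (and, for $\ell=1$, around the square quiver), the bookkeeping of which black arrow of each triangle was put into $I$ rather than $Q''$, and a verification that the coefficients $\la_\Delta$ enter only through harmless rescalings; once the fibre has been pinned down, the rest is a formal application of the double dimensional reduction formula.
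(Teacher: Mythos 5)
Your proposal is correct and follows the same route the paper takes: the paper states (just before the lemma) that the fiber of $\pi:R(J_I,d)\to R(Q'',d)$ over $M$ equals $\Hom(M,\Si M)$, so that $\vi(M,N)=h^0(M,\Si N)$, and then the lemma is a direct application of the dimensional reduction formula \eqref{part function} together with \eqref{sigma}. Your more detailed verification that the cut relations $\dd W/\dd a=0$, $a\in I$, are exactly the intertwining conditions for a morphism $M\to\Si M$ is precisely the content the paper delegates to the translation-potential-quiver structure discussed in the remark following the lemma.
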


\begin{remark}
The above quiver with potential $(Q,W)$ is an example of a translation potential quiver studied in \cite{mozgovoy_translation}.
Given a quiver $Q$ with an automorphism $\vi:Q\to Q$,
we define a twisted double quiver $Q^\vi$ by adding arrows $a^*:\vi j\to i$, for all arrows $a:i\to j$ in~ $Q$.
We define a quiver $\tl Q^\vi$ by adding, furthermore, arrows $\ell_i:i\to\vi i$, for all vertices $i$ in~ $Q$.
The quiver $\tl Q^\vi$ is equipped with a potential consisting of cycles of the form
$$i\xto{\ell_i}\vi i\xto{\vi a}\vi j\xto{a^*} i,
\qquad i\xto{a} j\xto{\ell_j}\vi j\xto{a^*} i$$
for arrows $a:i\to j$ in $Q$.
Applying this construction to the quiver $Q''$ with the automorphism~$\Si$, we obtain the McKay quiver $Q$
with the potential from Lemma \ref{lm:McKay}.
\end{remark}

\section{Computation of \tpdf{$\sigma$}{sigma}}
\label{sec:sigma}
To describe the motivic generating series $\cA(t)$
using its expression from Lemma~\ref{afterdoublecut}, it is necessary to compute the interaction form $\sigma$ (or its symmetrization) on indecomposable representations of the quiver $Q''$ of type $\hat D_{\ell+2}$.
To simplify notation, we will use $r=\ell+2\ge 3$
and we will index the vertices of the quiver $Q''$ of type $\hat D_{r}$ as
\begin{ctikzcd}
0\drar&&&&&&r-1\\
&2\rar&3\rar&\dots\rar&r-3\rar&r-2\urar\drar\\
1\urar&&&&&&r
\end{ctikzcd}
for $r\ge4$ and
\begin{ctikzcd}
0\rar\drar&2\\
1\rar\urar&3
\end{ctikzcd}
for $r=3$ (this is a quiver of type $\hat D_3=\hat A_3$).
We first recall standard facts on the representation theory of extended Dynkin quivers of type $\hat D_{r}$ from \cite[Section 6]{dlabringel}.

\subsection{Root system \tpdf{$\hat D_r$}{Dr}}
The root system of finite type $D_r$ is constructed as follows.
Consider a vector space with a basis $(\eps_1,\dots,\eps_r)$ and a scalar product $(\eps_i,\eps_j)=\de_{ij}$.
Then the set of (positive) roots is 
$\De^\fin_+=\sets{\eps_i\pm\eps_j}{i<j}$
and
the simple roots are $\al_i=\eps_i-\eps_{i+1}$ for $1\le i<r$ and $\al_r=\eps_{r-1}+\eps_r$.
The maximal root is $\eps_1+\eps_2$.
To construct the root system of type $\hat D_r$, we consider an additional simple root $\al_0$ so that the indivisible imaginary root has the form
\begin{equation}
\de=\al_0+\al_1+2(\al_2+\dots+\al_{r-2})+\al_{r-1}+\al_r.
\end{equation}
The set of positive real roots of type $\hat D_r$ is
\begin{equation}
\De^\re_+=\De^\fin_+\cup\sets{\De^\fin+n\de}{n\ge1},
\qquad \De^\fin=\De^\fin_+\cup(-\De^\fin_+).
\end{equation}
The set of positive imaginary roots is $\De^\im_+=\sets{n\de}{n\ge1}$.

In what follows we will identify standard basis vectors $e_i\in\bZ^{Q''_0}$ with the simple roots $\al_i$
and we will identify a vector $d\in\bZ^{Q''_0}$ with the vector $\sum_{i=0}^r d_i\al_i$ in the root lattice.
Note that dimension vectors of indecomposable representations of $Q''$ can be identified with the positive roots of the root system of type $\hat D_{r}$.
We define
\begin{equation}
\rho=e_0+e_1+\dots+e_r.
\end{equation}

\subsection{AR translation}
Recall that, for any acyclic quiver $Q$, 
the AR translation 
\begin{equation}
\ta:D^b(\Rep Q)\to D^b(\Rep Q)
\end{equation}
is given by $\ta=\nu[-1]$, 
where $\nu(M)=\RHom(M,A)\dual$, $A=\bC Q$.
We will also denote by $\ta$ the induced map on the Grothendieck group $K(Q)\iso\bZ^{Q_0}$.
For any vertex $i\in Q_0$, let $P_i=Ae_i$ 
be the corresponding indecomposable projective (left) $A$-module and $I_i=(e_iA)\dual$ be the corresponding indecomposable injective (left) $A$-module.
Then $\nu(P_i)=I_i$.

Recall that the involution $\Si$ of the quiver $Q''$
defined in \eqref{Sigma}
acts on the vertices by 
\begin{equation}
\Sigma(0)=1,\qquad
\Sigma(r-1)=r,\qquad
\Sigma(i)=i,\quad 2\leq i\leq r-2.
\end{equation}
It induces a functor
\begin{equation}
\Si:\Rep Q''\to \Rep Q''
\end{equation}
as well as an automorphism of the Grothendieck group
$K(Q'')$.
We have $\nu\Si P_i=I_i=\Si\nu P_i$ for $2\le i\le r-2$,
$\nu\Si P_0=I_1=\Si \nu P_0$ and similarly for $P_1,P_{r-1}$, $P_{r}$.
This implies that
$$\ta\Si=\Si\ta.$$

\begin{remark}
The fact that $\nu=\ta[1]$ is the Serre functor implies that $\ta$ commutes with any exact auto-equivalence of $D^b(\Rep Q)$.
\end{remark}

\begin{lemma}
The map $\ta$ on $\bZ^{Q''_0}$ is given by (with $\rho=\sum_{i=0}^r e_i$)
$$e_2\mto e_3\mto\dots\mto e_{r-2}\mto\rho\mto e_2,$$
$$e_{r-1}\mto-\rho+e_r,\quad
e_{r}\mto-\rho+e_{r-1},\quad
e_0\mto e_1+e_2,\quad
e_1\mto e_0+e_2.
$$
\end{lemma}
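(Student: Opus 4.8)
### Proof Plan

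The plan is to compute the action of the AR translation $\ta$ on the Grothendieck group $K(Q'')\iso\bZ^{Q''_0}$ directly from its definition $\ta = \nu[-1]$, using the fact that $\nu$ sends projectives to injectives. For an acyclic quiver, on the Grothendieck group one has the identity $\ta = -C\trp C\inv$ (up to conventions), or equivalently $\ta[P_i] = [I_i] - (\text{something})$ via the non-split triangle; but the cleanest route here is to use the Coxeter-functor description: if $P_i$ is the indecomposable projective at vertex $i$, then on the level of classes $\nu[P_i] = [I_i]$, and $\ta$ is determined by additivity once we know it on the basis $\{[P_i]\}$ together with the relation expressing $[e_i]$ in terms of the $[P_j]$. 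So first I would write each simple root $e_i = \udim S_i$ as an alternating sum $\sum_j (\text{mult}) [P_j]$ using the projective resolution of $S_i$ over $\bC Q''$, then apply $\nu$ termwise to land in injectives, then shift by $[-1]$, i.e.\ apply the Coxeter transformation $c = \ta$ on $K(Q'')$.

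Concretely, the Coxeter transformation on $K(Q'')$ is given by $\ta(\al) = -\al + \sum$ of reflections, but for extended Dynkin quivers it is most transparent to use the known combinatorics: $\ta$ has order-controlled behavior, fixes the imaginary root $\de$, and permutes the real roots lying over a fixed point in the orbit structure of the tube/transjective components. Since dimension vectors of indecomposables are positive roots (preprojective, preinjective, regular), and $\ta$ acts on the transjective component of the AR quiver by its natural shift, the displayed formulas should follow by checking $\ta$ on the six "exceptional" vertices $0,1,r-1,r$ and the internal chain $2,\dots,r-2$ separately. For the internal vertices $2,\ldots,r-3$ one expects $e_i\mto e_{i+1}$ because the relevant preprojective modules form a chain; the "wrap-around" $e_{r-2}\mto\rho$ and $\rho\mto e_2$ encodes that after traversing the $D$-tail the next preprojective has dimension vector $\rho$ (all vertices multiplicity one, a subrepresentation of $\de$), and one more step returns to $e_2$. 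The four outer relations $e_{r-1}\mto -\rho+e_r$, $e_r\mto -\rho+e_{r-1}$, $e_0\mto e_1+e_2$, $e_1\mto e_0+e_2$ are then forced: the negative sign signals these classes are preinjective (antidominant), consistent with $\ta$ mapping certain projectives past the imaginary root.

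The verification itself is a finite linear-algebra check: one writes down the Euler (Cartan) matrix $C$ of $Q''$ in the chosen vertex labeling, forms the Coxeter matrix $\Phi = -C\trp C\inv$, and multiplies it against each standard basis vector $e_i$. I would organize this by exploiting the $\Si$-symmetry established just above the lemma ($\ta\Si = \Si\ta$): the pairs $(e_0,e_1)$ and $(e_{r-1},e_r)$ are swapped by $\Si$, so it suffices to compute $\ta e_0$ and $\ta e_{r-1}$ and then apply $\Si$ to get $\ta e_1$ and $\ta e_r$; and the chain vertices are $\Si$-fixed. The main obstacle — really the only place care is needed — is bookkeeping the orientation of $Q''$ correctly so that the Cartan matrix and hence $\Phi$ have the right (non-symmetric) form; a wrong orientation would flip preprojectives and preinjectives and spoil the signs in the last four formulas. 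Once the orientation is pinned down from the diagram in the excerpt (all arrows pointing "into" the central chain from $0,1$ and "out" toward $r-1,r$), the computation is mechanical and I would simply record the resulting images, which are exactly those in the statement.
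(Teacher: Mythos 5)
Your proposal is correct and is essentially the paper's own argument: both amount to computing the Coxeter transformation on $K(Q'')$ from the relation $\ta[P_i]=-[I_i]$ (equivalently $\Phi=-C\trp C\inv$), expressing each $e_i$ in terms of projectives via the orientation you correctly read off, and solving linearly; the paper just writes out the resulting identities such as $\ta(e_k)=\ta(P_k)-\ta(P_{k+1})=-I_k+I_{k+1}$ explicitly. The only difference is presentational (matrix form versus term-by-term), and your use of $\ta\Si=\Si\ta$ to halve the checks is a harmless shortcut.
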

\begin{proof}
For $2\le k\le r-2$, we have $\ta(P_k)=\ta(\sum_{i\ge k}e_i)=-\sum_{i\le k}e_i$.
Therefore $\ta(e_k)=e_{k+1}$ for $2\le k<r-2$.
On the other hand $\ta(e_{r-2})=-(I_{r-2}-I_{r-1}-I_{r})=\rho$.

We have $\ta(\rho-e_1)=\ta(P_0)=-I_0=-e_0$.
Similarly $\ta(\rho-e_0)=-e_1$.
On the other hand $\ta(\rho-e_0-e_1)=\ta(P_2)=-(e_0+e_1+e_2)$.
Therefore $\ta(\rho)=e_2$, $\ta(e_0)=e_1+e_2$, $\ta(e_1)=e_0+e_2$.
Finally, $\ta(e_{r-1})=-I_{r-1}=-\rho+e_r$.
Similarly $\ta(e_r)=-\rho+e_{r-1}$.
\end{proof}

\begin{remark}\label{alternation}
Note that
$$\ta(e_0-e_1)=e_1-e_0,\qquad \ta(e_{r-1}-e_r)=e_r-e_{r-1}.$$
Note also that $\de=\rho+e_2+\dots+e_{r-2}$ and $\ta(\de)=\de$.
\end{remark}

\subsection{Properties of indecomposable representations}
The preprojective indecomposables of $Q''$ are of the form $\tau^{-k}P_i$, where
$k\geq 0$, $i$ is a vertex of $Q''$ and
$\ta$ is the AR translation.
Similarly, the preinjective indecomposables are of the form $\tau^kI_i$. 
The remaining indecomposables belong to tubes, namely,
an infinite family of homogeneous tubes $\mathcal{R}(\lambda)$ (with the trivial action of $\ta$),
two tubes $\mathcal{R}_1,\mathcal{R}_2$ of width $2$ 
(with the action of $\ta$ having order~ $2$), and a tube $\mathcal{R}_3$ of width $r-2$ 
(respectively, no such tube in the case $r=3$).
We have the following dimension vectors of regular indecomposable representations:
\begin{description}
\item[$\mathcal{R}(\lambda)$] $\udim X=\delta$,
\item[$\mathcal{R}_1$] $\udim X_1=e_0+e_2+\ldots+e_{r-2}+e_{r-1}$, $\udim X_2=e_1+e_2+\ldots+e_{r-2}+e_{r}$,
\item[$\mathcal{R}_2$] $\udim X_1=e_0+e_2+\ldots+e_{r-2}+e_{r}$, $\udim X_2=e_1+e_2+\ldots+e_{r-2}+e_{r-1}$,
\item[$\mathcal{R}_3$] 
$\udim X_1=e_2$, 
$\udim X_2=e_3$, $\ldots$, $\udim X_{r-3}=e_{r-2}$, $\udim X_{r-2}=\rho=\sum_{i=0}^{r} e_i$.
\end{description}

%
%

Inspection of the above dimension vectors of the regular indecomposables shows that $\Sigma$ acts as identity on $\mathcal{R}(\lambda)$ and $\mathcal{R}_3$, and acts as $\tau$ on $\mathcal{R}_1,\mathcal{R}_2$.


As before, we define $h^i(M,N)=\dim\Ext^i(M,N)$.
Recall from Lemma \ref{afterdoublecut} that
\begin{equation}
\si(M,N)=2h^0(M,N)-2h^0(M,\Si N)-s(M,N).
\end{equation}

\begin{lemma}\label{si as h01}
For all representations $M$, $N$ of $Q''$
we have
$$\si(M,N)=h^0(M,N)+h^1(M,N)-h^0(M,\Si N)-h^1(M,\Si N).$$
In particular, $\si(\ta M,\ta N)=\si(M,N)$.
\end{lemma}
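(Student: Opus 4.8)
The plan is to reduce the claimed formula for $\si$ to a statement purely about the homological Euler form and then invoke the known relation between $s$ and that Euler form. Concretely, I would start from the definition $\si(M,N)=2h^0(M,N)-2h^0(M,\Si N)-s(M,N)$ and rewrite it as
\[
\si(M,N)=\bigl(h^0(M,N)+h^1(M,N)\bigr)-\bigl(h^0(M,\Si N)+h^1(M,\Si N)\bigr)+R,
\]
where the remainder is
\[
R=h^0(M,N)-h^1(M,N)-h^0(M,\Si N)+h^1(M,\Si N)-s(M,N).
\]
So the first step is to show $R=0$. Since $Q''$ is hereditary (acyclic), the Euler form on the Grothendieck group satisfies $\hi_{Q''}(d,e)=h^0(M,N)-h^1(M,N)$ whenever $\udim M=d$, $\udim N=e$. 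Hence $R=\hi_{Q''}(d,e)-\hi_{Q''}(d,\Si e)-s(d,e)$, a purely numerical identity in the dimension vectors.

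The second step is therefore to verify the numerical identity $s(d,e)=\hi_{Q''}(d,e)-\hi_{Q''}(d,\Si e)$. Recall from Section \ref{mckay} that $s(d,e)=\hi_{C_2\sqcup C_2}(d,e)$, the Euler form of the two cyclic quivers on the $\rho$-vertices; equivalently $s$ only sees the blue arrows. On the other hand $\hi_{Q''}(d,e)-\hi_{Q''}(d,\Si e)$ is a bilinear form that I can compute directly from the combinatorics: $\hi_{Q''}(d,e)=\sum_i d_ie_i-\sum_{a:i\to j}d_ie_j$, and subtracting the version with $e$ replaced by $\Si e$ cancels all the contributions of vertices and arrows fixed by $\Si$, leaving exactly the terms coming from the non-fixed vertices $0,1,r-1,r$ and the arrows incident to them — which is precisely the data of the two $C_2$'s recording the blue arrows. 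I would check this by a short explicit bilinear-algebra computation on the standard basis $e_i$, using the description of $\Si$ on vertices ($\Si(0)=1$, $\Si(r-1)=r$, and $\Si$ fixes the middle vertices). This is the only place a genuine (though routine) calculation is needed, and it is the main obstacle in the sense that one must be careful about the orientation of the arrows at the fork vertices and about the factor of $2$ in the definition of $s$; everything else is formal.

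With $R=0$ established, the displayed formula $\si(M,N)=h^0(M,N)+h^1(M,N)-h^0(M,\Si N)-h^1(M,\Si N)$ follows immediately. For the final assertion $\si(\ta M,\ta N)=\si(M,N)$, I would argue that $\ta$ is an auto-equivalence of $D^b(\Rep Q'')$ and hence preserves all $\Ext$-groups: $\Ext^i(\ta M,\ta N)\iso\Ext^i(M,N)$, so $h^i(\ta M,\ta N)=h^i(M,N)$ for $i=0,1$. For the terms involving $\Si$ I use the commutation relation $\ta\Si=\Si\ta$ established just above in the excerpt, so that $\ta\Si N\iso\Si\ta N$ and therefore $h^i(\ta M,\Si(\ta N))=h^i(\ta M,\ta(\Si N))=h^i(M,\Si N)$. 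Summing the four terms with signs gives $\si(\ta M,\ta N)=\si(M,N)$. I expect no obstacle here beyond correctly invoking that $\ta$ is an equivalence on the bounded derived category (true for any acyclic quiver) and the already-proved $\ta$–$\Si$ commutation.
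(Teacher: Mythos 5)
Your proposal is correct and follows essentially the same route as the paper: the paper's proof consists precisely of the direct verification of the numerical identity $\hi(d,e)-\hi(d,\Si e)=s(d,e)$ for the Euler forms of $Q''$ and $C_2\sqcup C_2$, combined (implicitly) with $\hi(d,e)=h^0(M,N)-h^1(M,N)$ for the hereditary algebra $\bC Q''$, which is exactly your reduction. Your argument for the $\ta$-invariance via $\ta$ being a derived auto-equivalence commuting with $\Si$ is also the intended one.
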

\begin{proof}
Using the definitions of the Euler forms $\hi$ of $Q''$ and $s$ of $C_2\sqcup C_2$, we can directly verify that
$$\hi(d,e)-\hi(d,\Sigma e)=s(d,e).$$
This implies the statement.
\end{proof}

\begin{lemma}\label{lm:prop of si}
For all representations $M$, $N$ of $Q''$
we have
\begin{enumerate}
\item $\sigma(M,\Sigma N)
=\si(\Si M,N)=-\sigma(M,N)$.
\item
$\sigma(M,N)=\sigma(N,\tau M)$.
\item $\si(M,N)+\si(N,M)=\si(M,N\oplus \ta N)$.
\end{enumerate}
\end{lemma}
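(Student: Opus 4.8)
The plan is to deduce all three identities from a few formal properties of a ``total Hom'' pairing, working in $D^b(\Rep Q'')$, where the AR translation $\ta$ becomes an honest auto-equivalence. For objects $X,Y$ of $D^b(\Rep Q'')$ set $[X,Y]=\sum_{k\in\bZ}\dim\Hom_{D^b}(X,Y[k])$. Since $Q''$ is hereditary, for genuine modules $M,N$ only $k=0,1$ contribute, so $[M,N]=h^0(M,N)+h^1(M,N)$, and hence Lemma~\ref{si as h01} rewrites simply as $\si(M,N)=[M,N]-[M,\Si N]$. I will then invoke three properties of $[\,\cdot\,,\cdot\,]$: (a) it is additive in each variable and unchanged by the shift $[1]$ in either variable; (b) $\Si$ extends to an exact auto-equivalence of $D^b(\Rep Q'')$ — it is induced by the quiver automorphism exchanging $0\leftrightarrow 1$ and $r-1\leftrightarrow r$ — with $\Si^2=\Id$, so that $[\Si X,\Si Y]=[X,Y]$; and (c) Serre duality, using that the Serre functor of $D^b(\Rep Q'')$ is $\nu=\ta[1]$ (recalled in the remark following the definition of $\ta$): summing the isomorphisms $\Hom_{D^b}(X,Y)\iso\Hom_{D^b}(Y,\nu X)\dual$ over all shifts of $Y$ and absorbing the resulting $[1]$ gives $[X,Y]=[Y,\ta X]$.

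Granting (a)--(c), each part is a short computation. For (1): $\si(M,\Si N)=[M,\Si N]-[M,\Si^2N]=[M,\Si N]-[M,N]=-\si(M,N)$; and applying (b) to both arguments, $[\Si M,N]=[\Si^2M,\Si N]=[M,\Si N]$, whence $\si(\Si M,N)=[\Si M,N]-[\Si M,\Si N]=[M,\Si N]-[M,N]=-\si(M,N)$. For (2): Serre duality and then (b) give $[M,\Si N]=[\Si N,\ta M]=[\Si^2N,\Si\ta M]=[N,\Si\ta M]$, while $[M,N]=[N,\ta M]$, so $\si(M,N)=[N,\ta M]-[N,\Si\ta M]=\si(N,\ta M)$. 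For (3): by additivity $\si(M,N\oplus\ta N)=\si(M,N)+\si(M,\ta N)$, and (2) with $M$ and $N$ interchanged reads $\si(N,M)=\si(M,\ta N)$; adding these, $\si(M,N)+\si(N,M)=\si(M,N\oplus\ta N)$.

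The only non-formal input is Serre duality, and the one point demanding care is to land it in exactly the form $[X,Y]=[Y,\ta X]$, with $\ta$ (not $\ta\inv$) in the second slot and the shift correctly absorbed; phrasing the pairing through $D^b$ and summing over all shifts is what makes this transparent, and it also sidesteps the usual projective/injective-summand bookkeeping in the module-level Auslander--Reiten formula (one could argue inside $\Rep Q''$ instead, but then stable and costable Hom must be tracked). Once (c) is pinned down, properties (1)--(3) fall out immediately as above, and I expect no remaining obstacle beyond routine manipulation of the involution $\Si$.
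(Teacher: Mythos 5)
Your proof is correct and follows essentially the same route as the paper: both rest on the reformulation $\si(M,N)=h^0(M,N)+h^1(M,N)-h^0(M,\Si N)-h^1(M,\Si N)$ from Lemma~\ref{si as h01}, the fact that $\Si$ is an involutive auto-equivalence, and Serre duality in the form of the Auslander--Reiten formula, with (3) deduced from (2) by additivity. Your packaging via the total pairing $[X,Y]$ in $D^b(\Rep Q'')$ is only a cosmetic refinement, though it does handle the case where $M$ has projective summands (so that $\ta M$ is not a module) more cleanly than the module-level statement.
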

\begin{proof}
The first statement follows immediately from Lemma \ref{si as h01}.
Using the Auslander-Reiten formula $h^1(M,N)=h^0(N,\tau M)$, we obtain
$$\si(M,N)=h^0(M,N)+h^0(N,\ta M)-h^0(M,\Si N)-h^0(N,\Si\ta M)$$
Therefore
$$\si(N,\ta M)
=h^0(N,\ta M)+h^0(\ta M,\ta N)
-h^0(N,\Si\ta M)-h^0(\ta M,\Si\ta N)$$
and the second statement follows.
The third statement is a direct consequence of the second statement.
\end{proof}

\begin{corollary}\label{cor:vanishing si}
If $\Si M\iso M$ or $\Si M\iso\ta M$, then $\si(M,N)+\si(N,M)=0$. 
\end{corollary}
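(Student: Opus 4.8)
The plan is to deduce the corollary formally from the two symmetry identities in Lemma~\ref{lm:prop of si}, handling the two hypotheses separately. The first thing I would note is that $\si$ depends only on the isomorphism classes of its arguments, since it is assembled from dimensions of $\Ext$-groups; this lets me replace $M$ by $\Si M$ or by $\ta M$ whenever one of the hypotheses $\Si M\iso M$ or $\Si M\iso\ta M$ holds.

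In the case $\Si M\iso M$, I would argue that $\si(\Si M,N)=\si(M,N)$ by isomorphism-invariance, while part~(1) of Lemma~\ref{lm:prop of si} gives $\si(\Si M,N)=-\si(M,N)$; comparing the two forces $\si(M,N)=0$. Running the same comparison with $M$ in the second slot (using $\si(N,\Si M)=\si(N,M)$ and again part~(1)) yields $\si(N,M)=0$ too, so in fact both summands of $\si(M,N)+\si(N,M)$ vanish, which is stronger than required.

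In the case $\Si M\iso\ta M$, I would first apply part~(2) of Lemma~\ref{lm:prop of si} to get $\si(M,N)=\si(N,\ta M)$, then replace $\ta M$ by the isomorphic representation $\Si M$ to obtain $\si(M,N)=\si(N,\Si M)$, and finally apply part~(1) in the form $\si(N,\Si M)=-\si(N,M)$. Combining these gives $\si(M,N)=-\si(N,M)$, i.e.\ $\si(M,N)+\si(N,M)=0$.

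I do not expect any real obstacle here: the statement is a one-line consequence of the identities already proved. The only point that needs a moment's attention is to be explicit about invoking isomorphism-invariance of $\si$ when swapping $\ta M$ for $\Si M$ in the second case (and, in the first case, to observe that the argument actually kills each term individually, not merely their sum).
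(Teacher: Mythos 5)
Your proposal is correct and follows the paper's own argument essentially verbatim: in the first case it cancels $\si(M,N)$ against $\si(\Si M,N)=-\si(M,N)$ from part~(1), and in the second case it chains part~(2) with part~(1) via $\si(M,N)=\si(N,\ta M)=\si(N,\Si M)=-\si(N,M)$. The extra care you take in spelling out isomorphism-invariance of $\si$ is fine but adds nothing beyond what the paper leaves implicit.
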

\begin{proof}
If $\Si M\iso M$, then $\si(M,N)=\si(\Si M,N)=-\si(M,N)$, hence $\si(M,N)=0$. 
Similarly $\si(N,M)=0$.
If $\si M\iso\ta M$, then $\si(M,N)=\si(N,\Si M)=-\si(N,M)$,
hence $\si(M,N)+\si(N,M)=0$.
\end{proof}



These facts allow us to compute the symmetrization of $\sigma$ on arbitrary pairs of indecomposables:

\begin{lemma}\label{formulasigma}
If $M,N$ are indecomposable representations of $Q''$, then
\sloppy
$\sigma(M,N)+\sigma(N,M)\not=0$ only when $M$ is in the $\tau$-orbit of $P_i$ or $I_i$ for $i=0,1,r-1,r$, and $N=M$ or $N=\Sigma M$. In this case, we have 
$$\sigma(M,M)=1,\qquad
\sigma(M,\Sigma M)=\sigma(\Sigma M,M)=-1.$$
\end{lemma}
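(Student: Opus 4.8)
The plan is to reduce, using the symmetries of $\si$ already established, to the case where one of $M,N$ is a projective or injective module supported at one of the four ``boundary'' vertices $0,1,r-1,r$ — where $\si$ degenerates to an explicit linear functional — and then to finish with a short computation with the Euler form $\hi$ of $Q''$.

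First the reductions. By Corollary~\ref{cor:vanishing si}, $\si(X,N)+\si(N,X)=0$ whenever $X$ or $N$ is an indecomposable with $\Si X\iso X$ or $\Si X\iso\ta X$. From the description of the indecomposables recalled above, every regular one is of this kind ($\Si$ acts as the identity on $\mathcal R(\la)$ and $\mathcal R_3$ and as $\ta$ on $\mathcal R_1,\mathcal R_2$), and so is every $\ta^{-k}P_i$ and $\ta^kI_i$ with $2\le i\le r-2$, because $\Si$ then fixes the vertex $i$, hence $P_i$ and $I_i$, and commutes with $\ta$. So the symmetrized $\si$ vanishes unless both $M$ and $N$ lie in the $\ta$-orbits of $P_i$ or $I_i$ with $i\in\{0,1,r-1,r\}$. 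Since $\si(\ta M,\ta N)=\si(M,N)$ (Lemma~\ref{si as h01}), applying a suitable common power of $\ta$ one may then assume that one of the two objects is a genuine $P_i$ (if preprojective) or $I_i$ (if preinjective) with $i\in\{0,1,r-1,r\}$, while the other is an indecomposable module $N'$. Thus it suffices to evaluate $\si(P_i,N')+\si(N',P_i)$ and $\si(I_i,N')+\si(N',I_i)$.

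Put $v_i=e_i-e_{\Si i}$ for $i\in\{0,1,r-1,r\}$. Since $P_i$ is projective, $\Ext^1(P_i,-)=0$, so Lemma~\ref{si as h01} gives $\si(P_i,N')=h^0(P_i,N')-h^0(P_i,\Si N')=(\udim N')_i-(\udim N')_{\Si i}$, and dually $\si(N',I_i)=(\udim N')_i-(\udim N')_{\Si i}$; a direct check shows this functional equals $\hi(v_i,\udim N')$, and also $\hi(v_i,d)=\hi(d,v_i)$, the outgoing (resp.\ incoming) contributions cancelling because $\{i,\Si i\}$ is an orbit of sources (resp.\ sinks). One has $\ta v_i=-v_i$ by Remark~\ref{alternation}, and the key identity $\hi(v_i,d)+\hi(v_i,\ta d)=0$ for all $d\in\bZ^{Q''_0}$ — either from $\ta$-compatibility of $\hi$ together with $\hi(v_i,d)=\hi(d,v_i)$, or simply by evaluation on the standard basis $e_0,\dots,e_r$ using the explicit formula for $\ta$ above. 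If $N'$ is a non-projective indecomposable, $\ta N'$ is a module with $\udim\ta N'=\ta\,\udim N'$, so using $\si(N',P_i)=\si(P_i,\ta N')$ (Lemma~\ref{lm:prop of si}(2)),
\[
\si(P_i,N')+\si(N',P_i)=\hi(v_i,\udim N')+\hi(v_i,\ta\,\udim N')=0,
\]
and likewise $\si(I_i,N')+\si(N',I_i)=0$ when $N'$ is non-injective, via $\si(I_i,N')=\si(\ta^{-1}N',I_i)$ (Lemma~\ref{lm:prop of si}(2)) and the same identity applied to $\ta^{-1}\,\udim N'$. The only configurations left after the reduction are $N'=P_j$ in the first computation and $N'=I_j$ in the second, with $i,j\in\{0,1,r-1,r\}$. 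As $P_j$ is itself projective, $\si(P_j,P_i)=(\udim P_i)_j-(\udim P_i)_{\Si j}=\hi(v_j,\udim P_i)$, so $\si(P_i,P_j)+\si(P_j,P_i)=\hi(v_i,\udim P_j)+\hi(v_j,\udim P_i)$; since $\dim(P_a)_b$ is the number of paths $a\to b$ and $0,1$ are sources while $r-1,r$ are sinks, a direct check gives $\hi(v_i,\udim P_j)=\de_{ij}-\de_{i,\Si j}=\hi(v_j,\udim P_i)$, hence $\si(P_i,P_j)+\si(P_j,P_i)=2(\de_{ij}-\de_{i,\Si j})$. This is nonzero precisely for $j=i$, giving $\si(P_i,P_i)=1$, or $j=\Si i$, giving $\si(P_i,P_{\Si i})=\si(P_{\Si i},P_i)=-1$ by Lemma~\ref{lm:prop of si}(1); the case $N'=I_j$ is identical with $P$ and $I$ interchanged. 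Undoing the $\ta$-shift (using $\Si\ta=\ta\Si$) converts ``$j=i$'' into ``$N=M$'' and ``$j=\Si i$'' into ``$N=\Si M$'', which together with the first reduction is exactly the assertion.

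The main obstacle is the bookkeeping of the reduction step: one must check that in each preprojective/preinjective/regular/mixed configuration of $(M,N)$ a common $\ta$-shift brings one argument to a genuine boundary projective or injective, so that the formula $\si(P_i,-)=\hi(v_i,-)$ applies, and one must not overlook the finitely many diagonal exceptions where $N'$ is itself such a module. The conceptual point making everything work is the vanishing $\hi(v_i,d)+\hi(v_i,\ta d)=0$ — that is, that the ``defect-type'' vector $v_i=e_i-e_{\Si i}$ is symmetric for the Euler form of $Q''$ — which holds precisely because $\{i,\Si i\}$ is a $\Si$-orbit of sources or of sinks.
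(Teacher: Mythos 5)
Your proof is correct and follows essentially the same route as the paper's: reduce via Corollary \ref{cor:vanishing si} and $\tau$-invariance to the case $M=P_i$ (or $I_i$) with $i\in\{0,1,r-1,r\}$, identify $\si(P_i,-)$ with the functional $d\mapsto d_i-d_{\Si i}$, use its alternation under $\tau$ (Remark \ref{alternation}) together with Lemma \ref{lm:prop of si} to kill the symmetrized form for non-projective $N$, and check the projective--projective cases directly. The reformulation via $v_i=e_i-e_{\Si i}$ and the Euler form is a pleasant gloss but not a different argument.
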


\begin{proof}
According to our analysis of indecomposables, $\Si$ acts as identity or as $\ta$ on all regular indecomposable representations.
It also acts as identity on representations of the form $\ta^{-k}P_i,\ta^k I_i$ for all $k\ge0$, $2\le i\le r-2$.
Therefore by the above corollary, we can assume that $M,N$ are in the $\ta$-orbits of $P_i$ or $I_i$ for $i=0,1,r-1,r$.
We only treat the case where $M$ is preprojective, the other case then follows by duality.
By $\tau$-invariance of $\si$, we can assume that $M=P_i$.
Then we obtain from Lemma \ref{si as h01}
\begin{equation}\label{proj si}
\sigma(P_i,N)=h^0(P_i,N)-h^0(P_i,\Si N)=
d_i-d_{\Sigma i},
\end{equation}
where $d$ is the dimension vector of $N$.
This functional on dimension vectors alternates under $\tau$
by Remark \ref{alternation}, thus assumes the value $0$ on a dimension vector of the form $d+\tau d$.
If $N$ is not projective, then \eqref{proj si} applies to $N\oplus \ta N$, hence $\si(P_i,N\oplus \ta N)=0$
and we conclude that $\si(P_i,N)+\si(N,P_i)=0$
by Lemma \ref{lm:prop of si}.
Let $N=P_j$ be projective.
If $2\le j\le r-2$, then $\Si P_j=P_j$ and we conclude that
$\si(P_i,P_j)+\si(P_j,P_i)=0$ by Corollary 
\ref{cor:vanishing si}.
Therefore we can assume that $j=0,1,r-1,r$.

If $j\ne i,\Si i$, then 
$\si(P_i,P_j)=0$ by \eqref{proj si} and similarly $\si(P_j,P_i)=0$.

If $j=i$, then $\si(P_i,P_i)=1$ by \eqref{proj si}.

If $j=\Si i$, then $\si(P_i,P_{\Si i})=-1=\si(P_{\Si i}, P_i)$ by \eqref{proj si}.
\end{proof}


\subsection{Characterization of some \tpdf{\ta}{tau}-orbits}
In this section we will study dimension vectors of indecomposable representations in the above $\ta$-orbits.

\begin{lemma}
We have (in the Grothendieck group of $Q''$)
$$\Si\ta(I_k)=e_2+I_k,\qquad k=0,1,r-1,r.$$
$$\ta(I_k)=e_2+I_{k+1},\qquad 2\le k<r-2.$$
$$\ta(I_{r-2})=e_2+\rho+I_{r-2}.$$
\end{lemma}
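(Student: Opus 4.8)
The plan is to prove all six identities by a direct computation in the Grothendieck group $\bZ^{Q''_0}$, using the explicit action of $\ta$ on the standard basis obtained in the AR-translation lemma and its proof (namely $\ta e_0=e_1+e_2$, $\ta e_1=e_0+e_2$, $\ta e_k=e_{k+1}$ for $2\le k\le r-3$, $\ta e_{r-2}=\rho$, $\ta e_{r-1}=-\rho+e_r$, $\ta e_r=-\rho+e_{r-1}$, and $\ta\rho=e_2$), together with the dimension vectors of the indecomposable injectives. First I would record the latter, reading them off the orientation of $Q''$ (the injective at a vertex is supported on the paths ending there), or extracting them from the identities $\ta P_i=-I_i$ already used in the AR-translation lemma:
$$I_0=e_0,\quad I_1=e_1,\quad I_k=e_0+e_1+\dots+e_k\ \ (2\le k\le r-2),\quad I_{r-1}=\rho-e_r,\quad I_r=\rho-e_{r-1}.$$
I would also note that $\Si$ swaps $e_0\leftrightarrow e_1$ and $e_{r-1}\leftrightarrow e_r$ while fixing $e_2,\dots,e_{r-2}$ and $\rho$, so that $\Si$ fixes $I_k$ for $2\le k\le r-2$ and swaps $I_0\leftrightarrow I_1$, $I_{r-1}\leftrightarrow I_r$. (I assume $r\ge4$ throughout; for $r=3$ the quiver $Q''$ is of type $\hat A_3$, the injectives are simpler, and the analogous identities are verified the same way.)

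For the two middle identities I would apply $\ta$ termwise to $I_k=\sum_{i=0}^{k}e_i$. The first two summands give $\ta e_0+\ta e_1=e_0+e_1+2e_2$, and for $2\le k<r-2$ the tail contributes $\ta e_2+\dots+\ta e_k=e_3+\dots+e_{k+1}$, since on those indices $\ta$ merely shifts $e_j\mapsto e_{j+1}$. Summing gives $\ta(I_k)=e_0+e_1+e_2+\dots+e_{k+1}+e_2=I_{k+1}+e_2$, which is the second displayed line. For $k=r-2$ the only change is that the last summand is now $\ta e_{r-2}=\rho$ rather than $e_{r-1}$, so $\ta(I_{r-2})=e_0+e_1+e_2+\dots+e_{r-2}+e_2+\rho=I_{r-2}+e_2+\rho$, which is the third line.

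For the four boundary identities I would compute $\ta(I_k)$ for $k=0,1,r-1,r$ and observe that in every case the outcome is $e_2+\Si(I_k)$, whence $\Si\ta(I_k)=e_2+I_k$ follows from $\Si^2=\Id$ and $\Si e_2=e_2$. Concretely, $\ta(I_0)=\ta e_0=e_1+e_2=e_2+\Si(I_0)$ and $\ta(I_1)=\ta e_1=e_0+e_2=e_2+\Si(I_1)$; and
$$\ta(I_{r-1})=\ta\rho-\ta e_r=e_2-(-\rho+e_{r-1})=e_2+\rho-e_{r-1}=e_2+\Si(\rho-e_r)=e_2+\Si(I_{r-1}),$$
with the case $k=r$ obtained by interchanging $r-1$ and $r$ throughout.

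The computation is entirely mechanical, so there is no genuine obstacle: the only point that needs care is correctly pinning down the dimension vectors of the injectives and tracking the $\Si$-action on the basis vectors, after which each of the six identities collapses to a single line. One could alternatively deduce the boundary identities from the middle ones via $\ta\Si=\Si\ta$ and the relations among the $I_i$ recorded earlier, but the direct route above is shortest.
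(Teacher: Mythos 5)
Your proof is correct and follows essentially the same route as the paper: a direct termwise computation of $\ta$ on the dimension vectors of the injectives, using the explicit basis formulas for $\ta$ and the fact that $\Si$ swaps $e_0\leftrightarrow e_1$ and $e_{r-1}\leftrightarrow e_r$ while fixing $e_2,\dots,e_{r-2}$ and $\rho$. You are merely more explicit than the paper in recording the injective dimension vectors up front and in writing out all four boundary cases rather than appealing to symmetry.
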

\begin{proof}
We have $\ta(I_0)=\ta(e_0)=e_1+e_2$, hence $\Si\ta(I_0)=e_0+e_2=e_2+I_0$.
We have
$$\ta(I_{r})=\ta(\rho-e_{r-1})
=e_2+\rho-e_{r}=
e_2+I_{r-1},
$$
hence $\Si\ta I_r=e_2+I_r$.
For $2\le k<r-2$, we have
$$\ta(I_k)=\ta\bigg(\sum_{i\le k}e_i\bigg)
=e_2+\sum_{i\le k+1}e_i=e_2+I_{k+1}.$$
Finally,
$\ta(I_{r-2})=\ta(\rho-e_{r-1}-e_r)
=e_2+2\rho-e_r-e_{r-1}=e_2+\rho+I_{r-2}$.
\end{proof}


\begin{lemma}
\label{lm:p=0}
An indecomposable representation having dimension vector $d$ is in a
\ta-orbit of $P_i,I_i$ for $i=0,1,r-1,r$ if and only if $p(d)=1$, where
$$p(d)=d_0+d_1+d_{r-1}+d_r\pmod 2.$$
\end{lemma}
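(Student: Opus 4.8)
The plan is to recognize $p$ as a $\bZ/2$-valued linear functional on the Grothendieck group that is invariant under the AR translation, so that its value on $\udim X$ depends only on the $\ta$-orbit of an indecomposable $X$. First I would note that $p(d)=d_0+d_1+d_{r-1}+d_r\pmod 2$ is additive in $d$, hence descends to a linear functional on $K(Q'')\iso\bZ^{Q''_0}$ modulo $2$. Using the explicit formula for $\ta$ on $\bZ^{Q''_0}$ recalled above, I would check $p(\ta e_i)\equiv p(e_i)\pmod 2$ on every generator: since $p(\rho)=4\equiv 0$, one gets $p(\ta e_0)=p(e_1+e_2)=1=p(e_0)$, $p(\ta e_1)=p(e_0+e_2)=1=p(e_1)$, $p(\ta e_k)=p(e_{k+1})=0=p(e_k)$ for $2\le k<r-2$, $p(\ta e_{r-2})=p(\rho)=0=p(e_{r-2})$, $p(\ta\rho)=p(e_2)=0=p(\rho)$, and $p(\ta e_{r-1})=p(-\rho+e_r)=p(e_r)=1=p(e_{r-1})$, likewise for $e_r$. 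Since $\ta$ acts invertibly on $K(Q'')$, it follows that $p$ is constant on every $\ta$-orbit, so $p(\udim X)=p(\udim\ta X)$ for all indecomposables $X$.

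Next I would evaluate $p$ on the dimension vectors of the projectives, the injectives, and the regular simples of $Q''$. From the dimension vectors recorded above — $\udim P_0=\rho-e_1$, $\udim P_1=\rho-e_0$, $\udim P_{r-1}=e_{r-1}$, $\udim P_r=e_r$, and $\udim P_j=e_j+\dots+e_{r-2}+e_{r-1}+e_r$ for $2\le j\le r-2$, with dual formulas for the $I_j$ — one reads off $p(\udim P_i)=p(\udim I_i)=1$ for $i\in\{0,1,r-1,r\}$ and $p(\udim P_j)=p(\udim I_j)=0$ for $2\le j\le r-2$. From the list of dimension vectors of regular indecomposables recalled before the statement, $p(\de)=0$ on the homogeneous tubes, $p$ vanishes on $\udim X_1,\udim X_2$ for the two rank-$2$ tubes $\cR_1,\cR_2$, and on $e_2,\dots,e_{r-2},\rho$ for the wide tube $\cR_3$; since the dimension vector of any regular indecomposable is a non-negative integer combination of dimension vectors of regular simples, $p$ vanishes on all of them by additivity.

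Finally I would combine both directions using the standard trichotomy for the tame quiver $Q''$: every indecomposable is preprojective (in the $\ta$-orbit of some $P_i$), preinjective (in the $\ta$-orbit of some $I_i$), or regular. If $X$ lies in the $\ta$-orbit of $P_i$ or $I_i$ with $i\in\{0,1,r-1,r\}$, then $p(\udim X)=1$ by the $\ta$-invariance of $p$; conversely, if $p(\udim X)=1$ for an indecomposable $X$, then $X$ is neither regular nor in the $\ta$-orbit of any $P_j,I_j$ with $2\le j\le r-2$, since all of those have $p=0$, forcing $X$ into the $\ta$-orbit of some $P_i,I_i$ with $i\in\{0,1,r-1,r\}$. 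I do not expect a genuine obstacle here: the only points requiring care are the mod-$2$ invariance of $p$ under $\ta$ (the short generator-by-generator check above, which hinges on $p(\rho)\equiv 0$) and making sure the case list of indecomposables — in particular the dimension vectors of the regular simples in the exceptional tubes — is exhaustive, which is exactly the classification recalled from \cite{dlabringel}.
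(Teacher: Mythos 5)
Your proof is correct, and it reorganizes the argument in a way that is genuinely cleaner than the paper's. The paper proves the lemma by explicitly tracking the $\ta$-orbits: it first establishes the auxiliary identities $\Si\ta(I_k)=e_2+I_k$ for $k=0,1,r-1,r$ and $\ta(I_k)=e_2+I_{k+1}$ for the middle vertices, then writes out the successive classes $I_k\mto e_2+I_k\mto e_2+e_3+I_k\mto\dots\mto\de+I_k$ and reads off $p$ along the way (using that $\Si$ does not change $p$), with the dual computation for preprojectives. You instead observe once and for all that $p$ is a $\bZ/2$-linear functional with $p\circ\ta\equiv p\pmod 2$ -- verified generator by generator from the same explicit formula for the Coxeter transformation, the key point being $p(\rho)\equiv 0$ -- so that $p$ is constant on $\ta$-orbits and it remains only to evaluate it at the orbit representatives $P_i$, $I_i$ and at the regular simples. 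This buys a shorter and more conceptual proof (no orbit bookkeeping, no separate treatment of $\Si\ta$ versus $\ta$), at the cost of nothing; both arguments ultimately rest on the same lattice formula for $\ta$ and the same trichotomy preprojective/preinjective/regular. One caveat you inherit from the paper: the displayed formula $\ta(e_0)=e_1+e_2$ is only valid for $r\ge 4$ (it presupposes a middle vertex $2$ with $p(e_2)=0$); for $r=3$ one has instead $\ta(e_0)=\rho+e_1$, for which your invariance check still goes through, but the formula as literally quoted would give $p(e_1+e_2)=0$ there since $e_2=e_{r-1}$. It is worth adding a sentence handling $r=3$ separately.
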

\begin{proof}
We conclude from the description of regular indecomposable representations that their dimension vectors satisfy $p(d)=0$.
For $2\le k\le r-2$ we have
\begin{multline*}
I_k\xmto{\ta} e_2+I_{k+1}
\xmto{\ta} e_2+e_3+I_{k+2}
\xmto{\ta}\dots\xmto{\ta}
e_2+\dots+e_{r-k-1}+I_{r-2}\\
\xmto{\ta} e_2+\dots+e_{r-k}+\rho+I_{r-2}
=\de+I_{r-k}.
\end{multline*}
Note that all of these classes have dimension vectors satisfying $p(d)=0$.
The same applies to higher powers of $\ta$.
For $k=0,1,r-1,r$ and $\bta=\Si\ta$ we have
$$I_{k}\xmto{\bta}e_2+I_k
\xmto{\bta} e_2+e_3+I_k\xmto{\bar\ta}\dots\xmto{\bar\ta}
e_2+\dots+e_{r-2}+I_k\xmto{\bar\ta}\de+I_{k}.$$
Note that all of these classes have dimension vectors satisfying $p(d)=1$.
The same applies to higher powers of $\bar\ta$.
This implies that also the dimension vectors of $\ta^n I_k$ for $n\ge0$, $k=0,1,r-1,r$ satisfy $p(d)=1$.
The proof for pre-projective representations is the same.
\end{proof}

\section{Motivic generating series}\label{sec:genfun}
Recall from Lemma \ref{afterdoublecut} that
\begin{equation}
\cA(t)=\sum_{m:S\to\bN}
\frac{(-q^\oh)^{-\sum_{X,Y}m_Xm_Y\si(X,Y)}}
{\prod_X(q\inv)_{m_X}}t^{\sum_X m_X\udim X}
\end{equation}
where $S$ is the set of isomorphism classes of indecomposable representations of the quiver~$Q''$
of type $\hat D_{\ell+2}$
and $\si$ is the interaction form computed in \S\ref{sec:sigma}.
As in Lemma \ref{lm:p=0}, we define
\begin{equation}
p(d)=d_0+d_1+d_{\ell+1}+d_{\ell+2}\pmod 2.
\end{equation}
We will present an explicit formula for the motivic generating series $\cA(t)$ in terms of the root system of type $\hat D_{\ell+2}$.

\begin{theorem}
\label{thm:gen fun}
We have
$$\cA(t)=\Exp\rbr{\frac{\sum_d\Om_d(q)t^d}{q-1}},$$
where $\Om_d(q)$ is
\begin{enumerate}
\item $q$ if $d\in \De^\re_+$ with $p(d)=0$ or $d=d'+\Si d'$ for some $d'\in\De^\re_+$ with $p(d')=1$.
\item $-q^\oh$ if $d\in\De_+^\re$ with $p(d)=1$.
\item $q(q+\ell+2)$ if $d\in\De_+^\im$.
\item zero otherwise. 
\end{enumerate}
\end{theorem}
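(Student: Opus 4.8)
The plan is to begin with the double dimensional reduction formula of Lemma~\ref{afterdoublecut} and the computation of $\si$ in Lemma~\ref{formulasigma}, and to factor $\cA(t)$ completely. By Lemma~\ref{lm:p=0}, the set $S_1$ of indecomposables lying in a $\ta$-orbit of some $P_i$ or $I_i$ with $i\in\set{0,1,r-1,r}$ is exactly $\sets{X\in S}{p(\udim X)=1}$; write $S_0=S\ms S_1$. Since $\Si P_0\iso P_1\not\iso P_0$, and likewise for $P_{r-1},P_r$ and the four injectives, while $\ta\Si=\Si\ta$, the involution $\Si$ acts freely on $S_1$, so $S_1$ decomposes into two-element orbits $\set{X,\Si X}$ with $\udim X\ne\udim\Si X$. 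Lemma~\ref{formulasigma} says $\si(X,Y)+\si(Y,X)$ vanishes unless $\set{X,Y}$ is such an orbit, where it equals $-2$, while $\si(X,X)=1$ on $S_1$; hence $\sum_{X,Y}m_Xm_Y\si(X,Y)=\sum_{\{X,\Si X\}}(m_X-m_{\Si X})^2$, the sum over $\Si$-orbits in $S_1$. Substituting this into Lemma~\ref{afterdoublecut} makes the summand split as a product over the blocks $\set Y$ ($Y\in S_0$) and $\set{X,\Si X}$ ($X\in S_1$), which are pairwise non-interacting (\cf the remark in \S\ref{potentialwithcut}); therefore $\cA(t)$ equals the product of the associated partial series.

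For $Y\in S_0$ that partial series is $\sum_{m\ge0}\frac{t^{m\udim Y}}{(q\inv)_m}=\Exp\rbr{\frac{qt^{\udim Y}}{q-1}}$, by the identity recorded in the proof of Theorem~\ref{C2 part function}; for a $\Si$-orbit $\set{X,\Si X}$ it is $F(t^{\udim X},t^{\udim\Si X})$, the $C_2$-series of Theorem~\ref{C2 part function}, equal to $\Exp\rbr{\frac{qt^{\udim X+\udim\Si X}-q^\oh(t^{\udim X}+t^{\udim\Si X})}{q-1}}$. Multiplying all of these and using injectivity of $\Exp$ yields
$$\cA(t)=\Exp\rbr{\frac{\sum_{Y\in S_0}q\,t^{\udim Y}+\sum_{\{X,\Si X\}}\bigl(q\,t^{\udim X+\udim\Si X}-q^\oh(t^{\udim X}+t^{\udim\Si X})\bigr)}{q-1}},$$
so that $\Om_d$ is $q$ times the motivic count of $Y\in S_0$ with $\udim Y=d$, plus $q$ times the number of $\Si$-orbits $\set{X,\Si X}$ with $\udim X+\udim\Si X=d$, minus $q^\oh$ times the number of $X\in S_1$ with $\udim X=d$.

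The step I expect to be the main obstacle is showing that for $X\in S_1$ the vector $d=\udim X+\udim\Si X$ is never a positive root, and is the sum of dimension vectors of at most one $\Si$-orbit. For the first assertion, $\udim X$ and $\udim\Si X$ are real roots, so $\hi_{Q''}(\udim X,\udim X)=\hi_{Q''}(\udim\Si X,\udim\Si X)=1$; a direct check on the four orbit representatives $\set{P_0,P_1}$, $\set{P_{r-1},P_r}$, $\set{I_0,I_1}$, $\set{I_{r-1},I_r}$ gives $\hi_{Q''}(\udim X,\udim\Si X)+\hi_{Q''}(\udim\Si X,\udim X)=0$, and this persists on all of $S_1$ since $\ta$ preserves $\hi_{Q''}$ and commutes with $\Si$; hence $\hi_{Q''}(d,d)=2$, whereas every positive root of $\hat D_r$ takes value $0$ or $1$ under $\hi_{Q''}$. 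For the second, Remark~\ref{alternation} shows that for each $\Si$-orbit $\udim X-\udim\Si X$ is $\pm(e_0-e_1)$ or $\pm(e_{r-1}-e_r)$; since $\tfrac12\bigl((e_0-e_1)-(e_{r-1}-e_r)\bigr)\notin\bZ^{Q''_0}$, for a given $d$ at most one of $e_0-e_1$, $e_{r-1}-e_r$ can equal $\udim X-\udim\Si X$ up to sign, and since a real root determines its indecomposable the orbit is recovered from $d$.

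It remains to run the case analysis on $d\in\bN Q''_0$. If $d\in\De^\re_+$ there is a unique indecomposable of dimension $d$, lying in $S_1$ exactly when $p(d)=1$. If $d=n\de\in\De^\im_+$ then every indecomposable of dimension $d$ is regular (defect $0$), and summing the copies of $n\de$ coming from the tubes — $r-2$ quasi-lengths in the rank-$(r-2)$ tube, $2$ in each of the two rank-$2$ tubes, and one per homogeneous tube, the homogeneous tubes forming a family over $\bP^1$ minus the three exceptional points, of class $[\bP^1]-3=q-2$ (for $r=3$ there is no rank-$(r-2)$ tube and the homogeneous family has class $q-1$) — gives $q+r=q+\ell+2$ indecomposables of dimension $n\de$. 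If $d$ is not a root there are none. Inserting this into the formula for $\Om_d$, and using the previous paragraph — which in particular shows that $d'+\Si d'$ with $p(d')=1$ is never a root, so the four cases of the statement are mutually exclusive — one obtains: $-q^\oh$ if $d\in\De^\re_+$ with $p(d)=1$ (only the third count is nonzero, being $1$); $q$ if $d\in\De^\re_+$ with $p(d)=0$ (only the first count, equal to $1$); $q$ if $d=d'+\Si d'$ with $d'\in\De^\re_+$ and $p(d')=1$ (only the second count, equal to $1$ by the injectivity just proved); $q(q+\ell+2)$ if $d\in\De^\im_+$ (the first count is $q+\ell+2$, the others vanish as $\udim X+\udim\Si X$ is not a root and $p(n\de)=0$); and $0$ otherwise.
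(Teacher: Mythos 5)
Your proof is correct and follows essentially the same route as the paper: split the indecomposables into non-interacting blocks via Lemmas \ref{formulasigma} and \ref{lm:p=0}, use the $C_2$-series of Theorem \ref{C2 part function} for each $\Si$-orbit $\set{X,\Si X}$ with $p=1$, the one-loop identity for the remaining indecomposables, and the motivic count $q+r$ of regular indecomposables at each $n\de$. The extra details you supply (that $d'+\Si d'$ is never a root, and that $d$ determines the $\Si$-orbit) are exactly what the paper defers to the remark following the theorem.
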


\begin{remark}
Let $\hi$ be the Euler form of $Q''$.
As $Q''$ is of affine type, a vector $d\in\bZ^{Q''_0}$ is a real root if and only if $\hi(d,d)=1$ 
and it is an imaginary root if and only if $\hi(d,d)=0$
(see \eg \cite[Prop.~1.6]{kac_infinite}).
In particular, if $d=d'+\Si d'$, for some $d'\in\De_+^\re$,
then $\hi(d,d)=2\hi(d',d')+2\hi(d',\Si d')$ is even,
hence $d$ is not a real root.
If we also have $p(d')=1$, then $d'$ is the dimension vector of $\ta^{-k}P_i$ or $\ta^k I_i$ for some $i=0,1,\ell+1,\ell+2$ and $k\ge0$.
Assuming that $d'=\udim \ta^{-k}P_i$, we have
$\hi(d',d')=\hi(P_i,P_i)=1$ and similarly $\hi(d',\Si d')=0$.
Therefore $\hi(d,d)=2$ and $d$ is not a root.
\end{remark}

\begin{remark}[The case of $\bZ_2\xx\bZ_2$]
For the root system of type $\hat A_3$, the finite positive roots are $$\De^\fin_+=\set{\al_1,\al_2,\al_3,\al_1+\al_2,\al_1+\al_3,\al_1+\al_2+\al_3}$$
and the indivisible imaginary root is $\de=\al_0+\al_1+\al_2+\al_3$.
The roots such that $p(d)=1$ are $$\set{\al_0,\al_2,\al_0+\al_2+\al_3,\al_0+\al_1+\al_2}
+\bN\de$$
and their $\Si$ translations.
This implies that $\Om_d(q)=q$ for $d\in\set{\al_0+\al_1,\al_2+\al_3}+\bN\de$ (note that these elements are not roots).
We also have $\Om_d(q)=q$ for the roots 
$$d\in\set{\al_1+\al_2,\al_1+\al_3,\al_0+\al_3,\al_0+\al_2}+\bN\de$$
satisfying $p(d)=0$.
We conclude that $\Om_d(q)=q$, for all $d=\al_i+\al_j+k\de$ with $i\ne j$, $k\ge0$.
Therefore (identifying $Q''_0$ with $\bZ_4$) 
\begin{equation}
\cA(t)
=\Exp\rbr{\frac{
q\sum_{i< j}t_it_{j}
-q^\oh\sum_i(t_i+t_it_{i+1}t_{i+2})
+q(q+3)t^\de
}{q-1}\sum_{n\ge0}t^{n\de}
}.
\end{equation}
We can use the above formula to compute unrefined
non-commutative Donaldson-Thomas (NCDT) invariants
and compare the result to \cite{young_generating}, where the molten crystal interpretation of NCDT invariants was used.
More precisely, the generating function of NCDT invariants (framed at the vertex~$0$) can be written as (\cf \cite{MP})
\begin{equation}
Z_{0,\NCDT}(-t_0,t_1,t_2,t_3)
=\Exp\rbr{\sum_d d_0\Om_d(1)t^d}.
\end{equation}
Applying our formulas for the motivic DT invariants $\Om_d(q)$, we can write the above expression as an infinite product 
\begin{gather}
Z_{0,\NCDT}(-t_0,t_1,t_2,t_3)
=M(1,t^\de)^4\cdot
\frac{\tl M(t_1t_2,t^\de)\tl M(t_1t_3,t^\de)
\tl M(t_2t_3,t^\de)}
{\tl M(t_1t_2t_3,t^\de)\tl M(t_1,t^\de)\tl M(t_2,t^\de)\tl M(t_3,t^\de)},
\\
M(q,t)=\prod_{n\ge1}(1-qt^n)^{-n},\qquad 
\tl M(q,t)=M(q,t)M(q\inv,t),
\end{gather}
which coincides with \cite[Theorem 1.7]{young_generating} up to some typos in loc.~cit.
\end{remark}

\begin{proof}[Proof of Theorem \ref{thm:gen fun}]
As in the previous section, we will use $r=\ell+2$.
We have seen earlier that (the symmetrization of) $\si(X,Y)$ can be nonzero only if $X,Y$ are in the \ta-orbits of $P_i,I_i$ for $i=0,1,r-1,r$.
The contribution of every indecomposable $X$ not of this form~is
$$\sum_{m\ge0}\frac1{(q\inv)_m}t^{m\udim X}
=\Exp\rbr{\frac{qt^{\udim X}}{q-1}}.$$

Note that the family of regular representations having dimension vector $d=k\de$ (imaginary root) has the motivic class $q+r$. Therefore their contribution is (we take the plethystic power)
$$\Exp\rbr{\frac{qt^d}{q-1}}^{q+r}
=\Exp\rbr{\frac{q(q+r)t^d}{q-1}}.
$$

According to \S\ref{sec:sigma},
an indecomposable representation $M$ having dimension vector $d$ with $p(d)=1$ has a nonzero interaction only with itself and a representation $\Si M$, having dimension vector ~$\Si d$.
Moreover, $\si(M,M)=1=-\si(M,\Si M)$.
By Theorem \ref{C2 part function},
the corresponding contribution to the generating series is equal to
$$F(t^d,t^{\Si d})=\Exp\rbr{\frac{qt^{d+\Si d}-q^\oh(t^d+t^{\Si d})}{q-1}}.$$
Summarizing, we obtain
\begin{multline*}
\cA(t)
=
\prod_{\ov{d\in\De^\re_+}{p(d)=0}}
\Exp\rbr{\frac{qt^d}{q-1}}\cdot
\prod_{d\in\De^\im_+}
\Exp\rbr{\frac{q(q+r)t^d}{q-1}}\xx\\
\xx\prod_{\ov{d\in\De^\re_+/\Si}{p(d)=1}}
\Exp\rbr{\frac{qt^{d+\Si d}-q^\oh(t^d+t^{\Si d})}{q-1}}.
\end{multline*}
This finishes the proof of Theorem \ref{thm:gen fun}.
\end{proof}

\bibliography{biblio}

\providecommand{\bysame}{\leavevmode\hbox to3em{\hrulefill}\thinspace}
\providecommand{\href}[2]{#2}
\begin{thebibliography}{10}

\bibitem{BBS}
Kai Behrend, Jim Bryan, and Bal{\'a}zs Szendr{\H{o}}i, \emph{{M}otivic degree
  zero {Donaldson-Thomas} invariants}, Invent. Math. \textbf{192} (2013),
  \href{http://arxiv.org/abs/0909.5088}{{\ttfamily arXiv:0909.5088}}.

\bibitem{bridgeland_mukai}
Tom Bridgeland, Alastair King, and Miles Reid, \emph{{T}he {M}c{K}ay
  correspondence as an equivalence of derived categories}, J. Amer. Math. Soc.
  \textbf{14} (2001), no.~3, 535--554 (electronic),
  \href{http://arxiv.org/abs/math/9908027}{{\ttfamily arXiv:math/9908027}}.

\bibitem{dlabringel}
Vlastimil Dlab and Claus~Michael Ringel, \emph{Indecomposable representations
  of graphs and algebras}, Mem. Amer. Math. Soc. \textbf{6} (1976), no.~173,
  v+57.

\bibitem{ginzburg}
Victor Ginzburg, \emph{{C}alabi-{Y}au algebras},  (2006),
  \href{http://arxiv.org/abs/math/0612139}{{\ttfamily arXiv:math/0612139}}.

\bibitem{kac_infinite}
Victor~G. Kac, \emph{{I}nfinite root systems, representations of graphs and
  invariant theory}, Invent. Math. \textbf{56} (1980), no.~1, 57--92.

\bibitem{KS}
Maxim Kontsevich and Yan Soibelman, \emph{{C}ohomological {H}all algebra,
  exponential {H}odge structures and motivic {D}onaldson-{T}homas invariants},
  Commun. Num. Theor. Phys. \textbf{5} (2011), 231--352,
  \href{http://arxiv.org/abs/1006.2706}{{\ttfamily arXiv:1006.2706}}.

\bibitem{MMNS}
Andrew Morrison, Sergey Mozgovoy, Kentaro Nagao, and Balazs Szendroi,
  \emph{{M}otivic {D}onaldson--{T}homas invariants of the conifold and the
  refined topological vertex}, Adv. Math. \textbf{230} (2012), 2065--2093,
  \href{http://arxiv.org/abs/1107.5017}{{\ttfamily arXiv:1107.5017}}.

\bibitem{MorrisonNagao}
Andrew Morrison and Kentaro Nagao, \emph{Motivic {D}onaldson-{T}homas
  invariants of small crepant resolutions}, Algebra Number Theory \textbf{9}
  (2015), no.~4, 767--813, \href{http://arxiv.org/abs/1103.3819}{{\ttfamily
  arXiv:1103.3819}}.

\bibitem{Mozgovoy_McKay}
Sergey Mozgovoy, \emph{{M}otivic {D}onaldson-{T}homas invariants and {M}c{K}ay
  correspondence}, 2011, \href{http://arxiv.org/abs/1107.6044}{{\ttfamily
  arXiv:1107.6044}}.

\bibitem{mozgovoy_translation}
\bysame, \emph{{T}ranslation quiver varieties}, 2019,
  \href{http://arxiv.org/abs/1911.01788}{{\ttfamily arXiv:1911.01788}}.

\bibitem{MP}
Sergey Mozgovoy and Boris Pioline, \emph{Attractor invariants, brane tilings
  and crystals}, 2020, \href{http://arxiv.org/abs/2012.14358}{{\ttfamily
  arXiv:2012.14358}}.

\bibitem{Thomas}
R.~P. Thomas, \emph{{A} holomorphic {C}asson invariant for {C}alabi-{Y}au
  3-folds, and bundles on {$K3$} fibrations}, J. Differential Geom. \textbf{54}
  (2000), no.~2, 367--438, \href{http://arxiv.org/abs/math/9806111}{{\ttfamily
  arXiv:math/9806111}}.

\bibitem{young_generating}
Benjamin Young, \emph{{G}enerating functions for colored 3{D} {Y}oung diagrams
  and the {D}onaldson-{T}homas invariants of orbifolds}, Duke Math. J.
  \textbf{152} (2010), no.~1, 115--153,
  \href{http://arxiv.org/abs/0802.3948}{{\ttfamily arXiv:0802.3948}}, With an
  appendix by Jim Bryan.

\end{thebibliography}
\bibliographystyle{hamsplain}
\end{document}